\theoremstyle{plain}
\newtheorem{thm}{Theorem}[section]
\newtheorem{cor}[thm]{Corollary}
\newtheorem{rem}[thm]{Remark}
\newtheorem{conj}[thm]{Conjecture}
\newtheorem{prob}[thm]{Problem}
\def\cal{\mathcal}
\def\bbb{\mathbb}
\def\op{\operatorname}
\renewcommand{\phi}{\varphi}
\newcommand{\N}{\bbb{N}}
\newcommand{\Z}{\bbb{Z}}
\newcommand{\Q}{\bbb{Q}}
\begin{document}

\title[Primitive integer solutions of certain Diophantine equations]{On primitive integer solutions of the Diophantine equation $t^2=G(x,y,z)$ and related
results} \author{Maciej Gawron and Maciej Ulas}

\keywords{fifth powers, equal sums of unlike powers, primitive solutions} \subjclass[2010]{11D41}
\thanks{The research of the authors was supported by Polish National Science Centre grants:  UMO-2014/13/N/ST1/02471 (MG) and UMO-2012/07/E/ST1/00185 (MU)}


\begin{abstract}
In this paper we investaigate Diophantine equations of the form $T^2=G(\overline{X}),\; \overline{X}=(X_{1},\ldots,X_{m})$, where mainly $m=3$ or $m=4$ and $G$ specific homogenous quintic form.
First, we prove that if $F(x,y,z)=x^2+y^2+az^2+bxy+cyz+dxz\in\Z[x,y,z]$ and $(b-2,4a-d^2,d)\neq (0,0,0)$, then the Diophantine equation $t^2=nxyzF(x,y,z)$ has solution in polynomials $x, y, z, t$ with integer coefficients, without polynomial common factor of positive degree. In case $a=d=0, b=2$ we prove that there are infinitely many primitive integer solutions of the Diophantine equation under consideration. As an application of our result we prove that for each $n\in\Q\setminus\{0\}$ the Diophantine equation
\begin{equation*}
T^2=n(X_{1}^5+X_{2}^5+X_{3}^5+X_{4}^5)
\end{equation*}
has a solution in co-prime (non-homogenous) polynomials in two variables with integer coefficients. We also present a method which sometimes  allow us to prove the existence of primitive integers solutions of more general quintic Diophantine equations of the form $T^2=aX_{1}^5+bX_{2}^5+cX_{3}^5+dX_{4}^5$, where $a, b, c, d\in\Z$. In particular, we prove that for each $m, n\in\Z\setminus\{0\}$ , the Diophantine equation
\begin{equation*}
T^2=m(X_{1}^5-X_{2}^5)+n^2(X_{3}^5-X_{4}^5)
\end{equation*}
has a solution in polynomials which are co-prime over $\Z[t]$. Moreover, we show how modification of the presented method can be used in order to prove that for each
$n\in\Q\setminus\{0\}$, the Diophantine equation
\begin{equation*}
t^2=n(X_{1}^5+X_{2}^5-2X_{3}^5)
\end{equation*}
has a solution in polynomials which are co-prime over $\Z[t]$.


\end{abstract}

\maketitle

\section{Introduction}\label{sec1}

The aim of this note is to present some results concerning the existence of primitive solutions of certain non-homogenous Diophantine equations of the form
\begin{equation}\label{general} t^2=G(x,y,z), \end{equation} where $G\in \Z[x,y,z]$ is homogenous form of degree 5. The above equation is a very special case of
the Diophantine equation
$$F(\overline{X}_{m})=G(\overline{Y}_{n}),$$
 where $F\in\Z[\overline{X}_{m}], G\in\Z[\overline{Y}_{n}]$ are homogenous forms with
co-prime degrees, and $\overline{T}_{k}=(T_{1},\ldots,T_{k})$. We observe that the question concerning the existence of integer solutions of this equation is
easy. Indeed, if $\op{deg}F=a, \op{deg}G=b$ with $\gcd(a,b)=1$ then there are positive integer $\alpha, \beta$ such that $\alpha a-\beta b=1$. Thus, the
$m$-tuple $\overline{X}_{m}$ with  $X_{i}=x_{i}T^{\alpha}, i=1,\ldots, m$ and $n$-tuple $\overline{Y}_{n}$ with $Y_{j}=y_{j}F(\overline{x})T^{\beta},
j=1,\ldots,n$, where $T=F(\overline{x})^{b-1}F(\overline{y})$ satisfy the equation $F(\overline{X})=G(\overline{Y})$. However, in general the solutions obtained in this way have common factor and it is an interesting and non-trivial question whether we can find polynomial solutions without constant common factors or solutions in co-prime integers.

The direct motivation to write this paper was a question whether there are infinitely many quadruplets of integers with such a property the sum of its fifth
powers is a square. This question can be seen as a very special case of a more general problem which is called {\it equal sums of unlike powers}. This problem
was investigated by Lander in \cite{Lan}. In the cited paper Lander presented some methods which allow to find integer solutions of the related Diophantine
equation. In this contents we also should mention a paper by Schinzel \cite{Sch1} (see also \cite{Sch2}), where more general Diophantine equations involving
powers are considered. Unfortunately, in most cases these methods produce solutions which are not co-prime. In particular, it is not know whether there are
infinitely many quadruplets of integers $(X_{1},X_{2},X_{3},X_{4})$ with sum of its fifth powers equal a square. We relate this question to the question of
existence of primitive solutions of the equation (\ref{general}) for specific choice of the homogenous form $G$.

Let us describe the content of the paper in some details. In section \ref{sec2} we consider the Diophantine equation $t^2=nxyzF(x,y,z)$ with
$F(x,y,z)=x^2+y^2+az^2+bxy+cyz+dxz$ and $n\in\Z\setminus\{0\}$. We prove that under assumption $(b-2,4a-d^2,d)\neq (0,0,0)$ the equation under consideration has a solution in polynomials $x, y, z\in\Z[u,v]$ such that $\gcd_{\Q[u,v]}(x,y,z)=1$. In the case $a=d=0, b=2$ we prove that there are infinitely many primitive integer solutions. In the next section we present an amusing application of this result and prove that for each $n\in\Q\setminus\{0\}$ the Diophantine equation
$$T^2=n(X_{1}^5+X_{2}^5+X_{3}^5+X_{4}^5)$$ has solution in co-prime polynomials with integer coefficients (Theorem \ref{fifth}). This application is possible due to the identity
\begin{equation*}
F_{2}=80xyz(x^2+y^2+z^2),
\end{equation*}
where
\begin{equation*}
F_{n}=(x+y+z)^{2n+1}+(-x+z-y)^{2n+1}+(x-y-z)^{2n+1}+(-x+y-z)^{2n+1}.
\end{equation*}
We make a rather long digression of independent interest and present an interesting arithmetic property of the sequence $\{F_{n}\}_{n\in\N_{+}}$.
More precisely, we find an explicite value of the highest power of an prime number $p$ which divide $F_{n}$ (as a polynomial in $\Z[x,y,z]$). Finally, in the
last section we present a method which sometimes allows to find polynomial solutions without common factor of positive degree of the Diophantine equation $T^2=aX_{1}^5+bX_{2}^5+cX_{3}^5+dX_{4}^5$. In particular, we prove that if $m, n\in\Z\setminus\{0\}$ then one can find co-prime polynomials $X_{1}, X_{2}, X_{3}, X_{4}\in\Z[t]$ of degree 4 and a polynomial $T$ such that
$$T^2=m(X_{1}^5-X_{2}^5)+n^2(X_{3}^5-X_{4}^5).
$$
This result is given as Theorem \ref{fifth-general}. Moreover, we also prove that for each non-zero rational number $n$  the Diophantine equation
$$
t^2=n(x^5+y^5-2z^5).
$$
has solution in co-prime polynomials with integer coefficients. At the end we discuss some numerical results and state certain conjectures which may stimulate further research.

\section{The equation $t^2=nxyzF(x,y,z)$}\label{sec2}

In this section we present an approach which allow us to find polynomial solutions without non-constant common factors of the Diophantine equation $t^2=nxyzF(x,y,z)$, where $n\in\Z\setminus\{0\}$
and
\begin{equation}\label{defF}
F(x,y,z)=x^2+y^2+az^2+bxy+cyz+dxz.
\end{equation}
More precisely, we prove the following:

\begin{thm}\label{F-thm} Let $n\in\Z\setminus\{0\}$ be given and let $F$ be a quadratic form given by {\rm (\ref{defF})} with $a,b,c,d\in\Z$. Let us consider the
diophantine equation
\begin{equation}\label{diop}
 t^2=nxyzF(x,y,z).
\end{equation} If $(b-2,4a-d^2,d)\neq (0,0,0)$ then the equation {\rm (\ref{diop})} has (non-homogenous) polynomial solution $x,y,z,t\in\Z[u,v]$ satisfying the
condition $\gcd_{\Q[u,v]}(x,y,z)=1$, moreover $\gcd_{\Z[u,v]}(x,y,z)|2n$.

If $a=d=0, b=2$ then the equation {\rm (\ref{diop})} has infinitely many primitive integer solutions.
\end{thm}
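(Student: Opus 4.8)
The plan is to treat the two cases separately, since they are genuinely different: in the generic case we want a parametric (non-homogeneous) polynomial solution, while in the degenerate case $a=d=0,b=2$ we must produce infinitely many primitive \emph{integer} solutions.

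For the generic case I would exploit the fact that $nxyzF(x,y,z)$ is a quintic form, so a square on the right forces a delicate balancing of degrees. The natural idea is to look for $x,y,z$ as monomials or low-degree polynomials in two parameters $u,v$ chosen so that $xyzF(x,y,z)$ becomes a perfect square times $n$, after absorbing $n$ into $t$ if necessary. A clean way to arrange this is to set two of the variables equal to suitable squares (or products of powers of $u$ and $v$) so that $xyz$ contributes an even exponent everywhere, and then force $F(x,y,z)$ itself to be a square. Since $F$ is a binary-type quadratic form once we specialise, say, $z$ in terms of $x,y$, the condition ``$F$ is a square'' becomes the condition that a conic has a rational point, and the hypothesis $(b-2,4a-d^2,d)\neq(0,0,0)$ is exactly what guarantees this conic is nondegenerate / has the rational point we need (e.g. $x=1,y=1,z=0$ gives $F=2+b$, and when $b\neq 2$ one can perturb; when $b=2$ but $d\neq 0$ or $4a\neq d^2$ one uses a point involving $z$). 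The coprimality condition $\gcd_{\Q[u,v]}(x,y,z)=1$ is then arranged by clearing only the common factors forced by $n$, which yields the divisibility $\gcd_{\Z[u,v]}(x,y,z)\mid 2n$. The main obstacle here is the bookkeeping: showing that in \emph{every} subcase allowed by $(b-2,4a-d^2,d)\neq(0,0,0)$ one can simultaneously make $xyz$ a square-compatible monomial \emph{and} $F$ a square, without the resulting $x,y,z$ sharing a non-constant common factor. I expect this to split into a handful of subcases (according to which of $b-2$, $4a-d^2$, $d$ is nonzero) each with its own explicit substitution.

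For the degenerate case $a=d=0$, $b=2$, we have $F(x,y,z)=x^2+y^2+2xy+cyz=(x+y)^2+cyz$, so the equation becomes $t^2=nxyz\big((x+y)^2+cyz\big)$. The plan is to force $(x+y)^2+cyz$ to be a perfect square by a one-parameter family of integer choices — for instance, after scaling, choosing $x+y$, $y$, $z$ so that $cyz$ is the ``middle-and-last'' part of a square $(x+y+k)^2$ for a free parameter $k$ — and then making the remaining factor $nxyz$ a square by an additional degree of freedom. Concretely one expects a two-parameter integer family; specialising cleverly gives infinitely many solutions, and primitivity ($\gcd(x,y,z,t)=1$ up to a bounded factor, then divide out) follows because the parameters can be taken coprime and one checks no fixed prime divides all of $x,y,z$. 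The subtlety — and I think the real work — is ensuring the solutions are genuinely \emph{distinct} (infinitely many, not a finite set repeated) and truly \emph{primitive} rather than scalings of a single solution; this is where one pins down the family with enough free parameters and then takes a subfamily on which $\gcd(x,y,z)$ is provably $1$.

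One should also double-check the edge case $c=0$ within the degenerate subcase, where $F=(x+y)^2$ and the equation reduces to $t^2=nxyz(x+y)^2$, i.e. essentially $s^2=nxyz$: here infinitely many primitive solutions come directly from, e.g., $x=n a^2$-type substitutions, so this case is easy and can be dispatched first.
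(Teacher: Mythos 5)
Your outline points in roughly the right direction at the top level -- the paper's proof does in effect split the problem into making $nxyz$ a square and making $F(x,y,z)$ a square, the latter via a conic with a rational point -- but as written the proposal has a genuine gap and one misattribution. The misattribution: the hypothesis $(b-2,4a-d^2,d)\neq(0,0,0)$ has nothing to do with the conic being nondegenerate or possessing a rational point. After the substitution $x=npq$, $y=qr$, $z=pr$, $t=npqrw$ (which makes $nxyz=n^2p^2q^2r^2$ a square for free), the condition $t^2=nxyzF$ becomes a conic in $(r,w)$ over $\Q(p,q)$ that \emph{always} contains the rational point $(r,w)=(0,npq)$, regardless of $a,b,c,d$. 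The hypothesis enters only much later, when one must strip the common factors from the resulting $x,y,z$: the excluded triple $(b,4a-d^2,d)=(2,0,0)$ is precisely the case in which the common factor cannot be removed by a further substitution. Your sketch of "perturbing the point $x=y=1$, $z=0$ where $F=2+b$" does not even line up with the sign of the condition ($b\neq 2$, not $b\neq -2$).

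The genuine gap is the step you dismiss as bookkeeping. Having parametrized the conic, one has $x,y,z$ as rational functions of $p,q$ and the conic parameter $s$, and the entire difficulty is to choose $p$ and $s$ (as functions of new variables $u,v$ and of $q$) so that each of $x,y,z$ becomes a fixed square times a polynomial, after which the square can be cancelled. The paper does this by imposing $npq=(ap^2+cpq+q^2-s^2)v^2$, which is a \emph{second} conic (in $(p,s)$ over $\Q(q,v)$, with rational point $(0,q)$), and then setting $q=(a-u^2)v$. Nothing in your proposal produces this or any substitute for it, and without it you have no polynomial solution at all, let alone one with $\gcd_{\Q[u,v]}(x,y,z)=1$; the subsequent case analysis ($b\neq2$; $b=2$, $4a\neq d^2$; $b=2$, $4a=d^2$, $d\neq0$) with resultant computations is what actually certifies coprimality and the bound $\gcd_{\Z[u,v]}(x,y,z)\mid 2n$. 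Finally, in the degenerate case $a=d=0$, $b=2$ your plan to "force $(x+y)^2+cyz$ to be a square" is the right instinct, but the concrete mechanism you would need is the paper's: set $x=y=cnv^2$, $z=1$, which reduces the problem to the Pell equation $u^2-4nv^2=1$ when $n$ is not a square (whence infinitely many \emph{integer} solutions), and a separate explicit family $x=y=cp^2q^2$, $z=(p^2-q^2)^2$ when $n$ is a square; without something of this kind you have no source of infinitely many primitive integer (as opposed to polynomial) solutions.
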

\begin{proof}
We proceed as follows. We put
\begin{equation}\label{sub1}
x=npq,\quad y=qr,\quad z=pr, \quad t=npqrw
\end{equation} and get the equality
\begin{equation*}
t^2-nxyzF(x,y,z)=n^2p^2q^2r^2(w^2-(n^2p^2q^2+n(dp+bq)pqr + (ap^2+cpq+q^2) r^2)).
\end{equation*} We thus left with the problem of finding integral solutions of
the diophantine equation
\begin{equation}\label{eq1}
w^2=n^2p^2q^2+n(dp+bq)pqr + (ap^2+cpq+q^2)r^2.
\end{equation}
Although we are interested in integral solutions we will concentrate on finding {\it rational solutions}. This is an easy task. Indeed, we can view the equation (\ref{eq1}) as an equation of quadratic
curve, say $\cal{C}$, defined over the rational function field $\Q(p,q)$ in the plane $(r,w)$. Nothing that $\cal{C}$ contains the $\Q(p,q)$-rational point
$(r,w)=(0,npq)$ we can find parametrization of solutions in the following form
\begin{equation}\label{rwsol}
r=\frac{npq(2s-dp-bq)}{ap^2+cpq+q^2-s^2},\quad
w=\frac{npq(ap^2+cpq+q^2-(dp+bq)s+s^2)}{ap^2+cpq+q^2-s^2},
\end{equation}
where $s$ is a rational parameter. Now, the crucial question arises: it is possible to
find a rational substitution for $p, q, s$ with such a property that each $x=npq, y=qr, z=pr$ with $r$ given above, take the form $PS^2, QS^2, RS^2$, where $P,
Q, R$ are polynomials without common factor and $S$ is a rational function? If yes, then necessarily the expression for $t$ is of the form $WS^5$ for some
polynomial $W$ and the quadruple $(x, y, z, t)=(P, Q, R, W)$, satisfying the condition $\gcd(x,y,z)=1$, is a solution of the equation $t^2=nxyzF(x,y,z)$. In
order to find a substitution we are looking for we define the rational functions $p$ and $s$ as the solution of the equation
\begin{equation*}
npq=(ap^2+cpq+q^2-s^2)v^2,
\end{equation*}
where $v$ is a rational parameter. It is easy to see that the considered equation defines a rational curve, say
$\cal{C}'$, defined over $\Q(q,v)$ in the plane $(p,s)$. The curve $\cal{C}'$ contains the point $(p,s)=(0,q)$ and thus by putting $s=q+pu$ we immediately get
the parametrization of $\cal{C}'$ in the following form
\begin{equation*}
p=\frac{q(n+(2u-c)v^2)}{v^2(a-u^2)},\quad s=\frac{q(nu+(a-cu+u^2)v^2)}{v^2(a-u^2)},
\end{equation*}
where $u$ is a rational parameter. We observe that for each expression  $U\in\{x, y, z\}$, where $x, y, z$ are given by (\ref{sub1}) with $p, s$
given above and corresponding $r, w$ given by (\ref{rwsol}) we have $U=q^2\frac{V_{1}}{V_{2}}$. Moreover, $V_{1}, V_{2}\in\Z[u,v]$ with $V_{2}$ dividing the
polynomial $(a - u^2)^2 v^2$. We thus put $q=(a-u^2)v$ and get (after cancelation of the square common factors from $x, y, z$) the solution of the equation
$t^2=nxyzF(x,y,z)$ in the following form
\begin{align}\label{pos1}
x&=n(a-u^2)(n+(2u-c)v^2), \nonumber\\
y&=(a-u^2)v^2(n(2u-d)+(2a-ab+cd-2(c+d)u + (b+2)u^2)v^2),\\
z&=(n+(2u-c)v^2)(n(2u-d)+(a(2-b)+cd-2(c+d)u + (b+2)u^2)v^2), \nonumber
\end{align} with the (polynomial) expression for $t$ given by
\begin{equation*}
t=\frac{xy}{(a-u^2)nv}(n+(2-b)u v^2)xy-nu(d-2u)(n-(c-2u)v^2)^2).
\end{equation*}

Let us observe that if we treat $x, y, z$ given above as polynomials defined over $K:=\Q[a,b,c,d]$ then we have $\gcd_{K[u,v]}(x,y,z)=1$. However, we are
essentially interested in integer specializations of $a, b, c, d$. In order to finish the proof we need to characterize the set of those
quadruplets $(a,b,c,d)$ of integers such that $\gcd(x,y,z)=1$ in $\Q[u,v]$. We thus fix $a,b,c,d\in \Z$. We consider two cases $b\neq 2$ and $b=2$.

Suppose that $b\neq 2$. First of all let us note that $a-u^2\mid \gcd(x,y)$. Moreover, we have
\begin{equation*}
\op{Res}_{u}(a-u^2,z)=(4a-d^2)(n^2-2cnv^2-(4a-c^2)v^4)^2
\end{equation*}
 and thus $a-u^2$ has a (polynomial) common factor with $z(u,v)$ if and only if $4a=d^2$. We thus put
$a=\frac{1}{4}d^2$. Under this assumption we get that $x, y, z$ has a common factor $d-2u$. However, this factor can be easily eliminated by introducing new
variable $w$ chosen in such a way that $2u-d=w^2$. Thus we put $u:=U(w)=\frac{1}{2}(d+w^2)$. Then we observe that the rational functions $x(U,v), y(U,v), z(U,v)$
has common factor $(w/4)^2$. Eliminating this factor from $x(U,v), y(U,v), z(U,v)$ we get the solution of the equation $t^2=nxyzF(x,y,z)$ in the following form:
\begin{align}\label{pos2}
x_{1}&=4n(2 d + w^2)((c - d - w^2)v^2-n),\nonumber\\
y_{1}&=v^2w^2(2d+w^2)(2(2c-bd)v^2-(2+b)v^2w^2-4n),\\
z_{1}&=4((c - d-w^2)v^2-n)((2+b)v^2w^2-2(2c-bd)v^2+4n)\nonumber.
\end{align}


However, it is still possible that the polynomials $x_{1}, y_{1}, z_{1}$ may have common polynomial factor for certain choices of $a, c, d$. We show that this is
not the case under assumption from the statement of our theorem. First of all, from our assumption on $n$, we note that the polynomial $v^2w^2$ has no common
factors with the polynomial $x_{1}z_{1}$. Next, we observe that $2d+w^2\mid \gcd(x_{1}, y_{1})$ and
\begin{equation*}
\op{Res}_{w}(2d+w^2,z_{1}(w,v))=(2d+w^2)^4.
\end{equation*}
This identity implies that the polynomial $2d+w^2$ has no common factor with $z_{1}$. Similarly, we observe that $n+(w^2+d-c)v^2|\gcd(x_{1},
z_{1})$ and we easily get
\begin{equation*}
\frac{\op{Res}_{v}(n+(w^2+d-c)v^2,z_{1})}{n^4w^4(2d+w^2)^4}=\frac{\op{Res}_{w}(n+(w^2+d-c)v^2,z_{1})}{v^8((c-d)v^2-n)^2 ((c+d)v^2-n)^4}=(b-2)^2.
\end{equation*}
It is clear that the denominators of the expressions above are non-zero for all choices of $a, d, c\in\Z$ and $n\in\Z\setminus\{0\}$. Thus the necessary
condition to the existence of common factor of $x_{1}, y_{1}, z_{1}$  is $b=2$ - the case we excluded.

We consider the case $b=2$ and suppose that $4a\neq d^2$. If $b=2$ then we have $\gcd(x,y,z)=n+(2u-c)v^2$ and thus in order to eliminate this factor we need to
make it square. In order to do this we put $u:=U(v,w)=(w^2+cv^2-n)/(2 v^2)$ and then observe that the rational functions $x(U,v), y(U,v), z(U,v)$ has common
factor $(2v^2/w)^2$. Eliminating this factor from $x(U,v), y(U,v), z(U,v)$ we get the solution of the equation $t^2=nxyzF(x,y,z)$ in the following form:
\begin{align}\label{pos3}
x_{2}&=-n(w^4-2 (n - c v^2)w^2+n^2-2cnv^2+(c^2-4a)v^4), \nonumber\\
y_{2}&=(n-w^2-(c-d)v^2)(w^4-2 (n - c v^2)w^2+n^2-2cnv^2+(c^2-4a)v^4),\\
z_{2}&=4v^2w^2(w^2+(c-d)v^2-n). \nonumber
\end{align}
Because $n\neq 0$ then the polynomials $v^2w^2, x_{2}$ has no common factors. Next, we observe that the polynomials
$x_{2}, z_{2}$ have common factor if and only if $\op{Res}_{v}(x_{2},z_{2})=0$ or $\op{Res}_{w}(x_{2},z_{2})=0$. However, we have
\begin{equation*}
\frac{\op{Res}_{v}(x_{2},z_{2})}{(n - w^2)^4}=\frac{\op{Res}_{w}(x_{2},z_{2})}{v^8}=(4 a - d^2)^2 n^2,
\end{equation*}
and thus from the non-vanishing of the
denominators of the expressions given above we deduce that the polynomials $x_{2}, z_{2}$ have common factor if and only if $4a=d^2$ - the case we excluded.

We consider the last case: $b=2, 4a=d^2$ and $d\neq 0$. In this case we have $\gcd(x,y,z)=(d-2 u)((c-2u)v^2-n)$ and thus in order to eliminate this factor we
need to make it square. It is easy to find a rational function $u$ such that $(d-2 u)((c-2u)v^2-n)$ is a square. Indeed, it is enough to take
$u:=U(v,w)=(dt^2+cv^2-n)/(2(t^2+v^2))$ and the the rational functions $x(U,v), y(U,v), z(U,v)$ has common factor $(2(v^2+w^2)^2)/((n-c v^2+d v^2)w)^2$. We thus
get the solution of the Diophantine equation $t^2=nxyzF(x,y,z)$ in the following form:
\begin{align}\label{pos4}
x_{3}&=-n(v^2+w^2)(n-(c+d)v^2-2dw^2), \nonumber\\
y_{3}&=v^2(n-(c-d)v^2)(n-(c+d)v^2-2dw^2),\\
z_{3}&=-4(n-(c-d)v^2)w^2(v^2 + w^2). \nonumber
\end{align}
Because $n\neq 0$ we have $\gcd(x_{3}y_{3}, w)=1$. Moreover, the
polynomials $x_{3}, y_{3}, z_{3}$ have common factor if and only if $\op{Res}_{v}(x_{3},n-(c-d)v^2)=0$ or $\op{Res}_{v}(y_{3},v^2+w^2)=0$. However, because
$dn\neq 0$ the expression $\op{Res}_{v}(x_{3},n-(c-d)v^2)=4d^2n^2(n+(c-d)w^2)^4$ is non-zero. Similarly, the expression
$\op{Res}_{v}(y_{3},v^2+w^2)=w^4(n+(c-d)v^2)^4$ can not be zero too.

Summing up: our discussion we see that under assumption $(b-2,4a-d^2,d)\neq (0,0,0)$ we get solution of the Diophantine equation (\ref{diop}) in polynomials with integer coefficients without common polynomial factor. One can check (by examination of the coefficients of our solutions) that in each case $\gcd_{\Z[u,v]}(x,y,z)$ divides $2n$. First part of our theorem follows.

We consider the case $a=d=0, b=2$. In this case we have
\begin{equation*}
 t^2 = nxyz ( x^2+y^2+2xy+cyz) = nxyz( (x+y)^2 +cyz)).
\end{equation*}
If $n$ is not a square we can put $x=y=cnv^2$ and $z=1$ to get
\begin{equation*}
t^2 = c^4n^4v^6(4nv^2+1).
\end{equation*}
Here $v$ is a variable. By substitution $u = t/(c^2n^2v^3)$ we get the equation \mbox{$u^2-4nv^2=1$} which is a Pell equation and by Lagrange theorem we know that it has infinitely many integer solutions.

When $n$ is a perfect square, we take $x=y=cp^2q^2$, $z = (p^2-q^2)^2$ and get
\begin{equation*}
 t^2 = nc^4p^6q^6(p^4-q^4)^2.
\end{equation*}
We can ensure that $\gcd(x,y,z) = 1$ by taking $p=cs+1$ and $q=cs$, for some $s$ and we are done. \end{proof}

\begin{cor}\label{corn1}
If $n=1$, then the equation {\rm (\ref{diop})} has a solution in polynomials $x, y, z, t\in\Z[u,v]$ with $\op{gcd}_{\Z[u,v]}(x,y,z)=1$.
\end{cor}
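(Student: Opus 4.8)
The plan is to re-examine, for $n=1$, the polynomial solutions constructed in the proof of Theorem~\ref{F-thm} — which already satisfy $\gcd_{\Q[u,v]}(x,y,z)=1$ — and to control their \emph{content} (the gcd of all integer coefficients of $x,y,z$): in each case it turns out to be $1$, or else a perfect square that can be divided out.

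Suppose first $(b-2,4a-d^2,d)=(0,0,0)$, i.e. $a=d=0$ and $b=2$. Then $n=1$ is a perfect square, so the last paragraph of the proof of Theorem~\ref{F-thm} applies and produces $x=y=cp^2q^2$, $z=(p^2-q^2)^2$ with $p=cs+1$, $q=cs$; since $z=(2cs+1)^2$ has constant term $1$ its content is $1$, hence $\gcd_{\Z[s]}(x,y,z)=1$, and this one-variable solution is in particular a solution in $\Z[u,v]$.

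Now suppose $(b-2,4a-d^2,d)\neq(0,0,0)$ and run the proof of Theorem~\ref{F-thm}. Depending on whether $b=2$ and whether $4a=d^2$, one lands in one of the families (\ref{pos1}), (\ref{pos3}), (\ref{pos4}) or in (\ref{pos2}). In the first three, the polynomial $x$ (respectively $x_2$, $x_3$) contains, when $n=1$, a monomial with coefficient $\pm1$ — the term $-u^2$, $-w^4$, $-v^2$ respectively — so its content is $1$, whence $\gcd_{\Z[u,v]}(x,y,z)$ divides $1$ and the solution is already primitive. The remaining family is (\ref{pos2}) (the case $b\neq2$, $4a=d^2$), where the leading factor $4n$ forces the content of $x_1$ to be $4$. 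Here I would substitute $(v,w)\mapsto(2v,w)$: a direct inspection of (\ref{pos2}) then shows that $x_1(2v,w)$ carries the factor $4$ while $y_1(2v,w)$ and $z_1(2v,w)$ carry the factor $16$, the remaining factors all having content $1$ (each has a $\pm1$ constant term); hence $\gcd_{\Z[v,w]}\big(x_1(2v,w),y_1(2v,w),z_1(2v,w)\big)=4=2^{2}$.

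Finally, since $xyzF(x,y,z)$ is homogeneous of the odd degree $5$ in $(x,y,z)$, a square common factor of $x_1,y_1,z_1$ can be removed: writing $x_1(2v,w)=4X$, $y_1(2v,w)=4Y$, $z_1(2v,w)=4Z$ one has $XYZF(X,Y,Z)=2^{-10}x_1(2v,w)y_1(2v,w)z_1(2v,w)F\big(x_1(2v,w),y_1(2v,w),z_1(2v,w)\big)=\big(t_1(2v,w)/2^{5}\big)^{2}$, and as the left-hand side lies in $\Z[v,w]$ we get $2^{5}\mid t_1(2v,w)$, so $(X,Y,Z,t_1(2v,w)/2^{5})$ solves (\ref{diop}) with $\gcd_{\Z[v,w]}(X,Y,Z)=1$. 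The one genuinely delicate point is this last family: there the \emph{polynomial} gcd of $x_1,y_1,z_1$ is already $1$, so the only obstruction is the integer content, and the trick is that rescaling $v$ turns that content into the perfect square $2^{2}$, which the oddness of the degree of $xyzF$ in $(x,y,z)$ then lets us cancel without leaving $\Z[v,w]$; everything else is a direct reading-off of the formulas in Theorem~\ref{F-thm}.
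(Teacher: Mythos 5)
Your proposal is correct and follows essentially the same route as the paper: it reads off a coefficient $\pm 1$ of $x$ in the families (\ref{pos1}), (\ref{pos3}), (\ref{pos4}) and rescales variables in the family (\ref{pos2}), the only difference being that the paper's substitution $v=4\hat v$, $w=\hat w/2$ brings the content of $x_{1}$ directly down to $1$, whereas your $v\mapsto 2v$ leaves the common square content $4$, which you then legitimately cancel via the homogeneity argument. Your explicit treatment of the case $a=d=0$, $b=2$ is a small bonus that the paper's proof of the corollary omits.
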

\begin{proof}
 When the solution is given by one of the forms (\ref{pos1}), (\ref{pos3}), (\ref{pos4})  then one of the coefficients of $x(u,v)$ is equal $-1$, and thus due to coprimality of $x,y,z$ over $\Q[u,v]$ we get that these polynomials are coprime in $\Z[u,v]$.  The only nontrivial case is when the solution is given by (\ref{pos2}).
 In that case we put $v=4\hat{v}, w = \hat{w}/2$. Then we get we get polynomials $x_1, y_1, z_1\in\Z[\hat{v},\hat{w}]$ which give solution of our equation. Moreover, the coefficient at $\hat{w}^2$ in $x(\hat{v},\hat{w})$ is $-1$ and thus the polynomials $x_1, y_1, z_1$ are coprime in $\mathbb{Z}[\hat{w},\hat{v}]$.

\end{proof}

\begin{rem}
{\rm Let us note that we essentially proved that if $(b-2,4a-d^2,d)\neq (0,0,0)$ then the system of the Diophantine equations
\begin{equation*}
t_{1}^2=nxyz,\quad t_{2}^2=x^2+y^2+az^2+bxy+cyz+dxz
\end{equation*}
has solution in polynomials $x(u,v),y(u,v),z(u,v)$, such that $\gcd_{\Q[u,v]}(x,y,z)=1$.
}
\end{rem}

Although we trying quite hard to prove the existence of polynomial solutions of the equation (\ref{diop}) satisfying the condition $\gcd_{\Z[u,v]}(x,y,z)=1$ we failed. However, in the light of the above result we expect that the following is true.

\begin{conj}
 For any quadratic form $F\in\Z[x,y,z]$ the Diophantine equation~{\rm (\ref{diop})} has:
\begin{enumerate}
 \item[(a)]  a solution in polynomials $x,y,z$ with integer coefficients with $\gcd(x,y,z)=1$;
 \item[(b)]  infinitely many primitive solutions in integers.
\end{enumerate}
\end{conj}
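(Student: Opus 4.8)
The plan is to exploit the multiplicative shape of the right-hand side of (\ref{diop}): a monomial substitution turns $nxyz$ into a perfect square, the surviving equation becomes a conic over a rational function field, and after that the entire difficulty lies in forcing $x,y,z$ to be free of a common polynomial factor. First I would set
\[
x=npq,\qquad y=qr,\qquad z=pr,\qquad t=npqrw,
\]
so that $nxyz=n^{2}p^{2}q^{2}r^{2}$ and (\ref{diop}) collapses to $w^{2}=F(npq,qr,pr)$. Expanded as a quadratic in $r$ this is a conic over $\Q(p,q)$ carrying the obvious rational point $(r,w)=(0,npq)$; the pencil of lines through that point parametrises its rational solutions by a single parameter $s$, and one reads off $r$ and $w$ as rational functions of $p,q,s$ with common denominator $D:=ap^{2}+cpq+q^{2}-s^{2}$. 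Hence $x,y,z$ are now explicit rational functions of $p,q,s$.

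The key manoeuvre is to specialise $p,q,s$ so that $x,y,z$ all become one fixed rational function squared times a polynomial: if $x=PS^{2}$, $y=QS^{2}$, $z=RS^{2}$ with $P,Q,R$ coprime polynomials, then the homogeneity relation $F(PS^{2},QS^{2},RS^{2})=S^{4}F(P,Q,R)$ forces $nPQR\,F(P,Q,R)$ to be a square, and $(P,Q,R)$ is the desired coprime polynomial solution. To arrange this I would impose the single relation $npq=Dv^{2}$ with a new parameter $v$; this instantly makes each of $x,y,z$ equal to $v^{2}$ times a polynomial in $p,q,s$. The relation $npq=Dv^{2}$ is itself a conic, now in $(p,s)$ over $\Q(q,v)$, with the visible rational point $p=0$, $s=q$; parametrising it by a line introduces a parameter $u$, and substituting back one finds each of $x,y,z$ in the form $q^{2}V_{1}/V_{2}$ with $V_{2}\mid (a-u^{2})^{2}v^{2}$. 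Taking $q=(a-u^{2})v$ clears the denominator, and after removing the remaining common square one is left with explicit $x(u,v),y(u,v),z(u,v)\in\Z[u,v]$ that are coprime when regarded as polynomials over $\Q[a,b,c,d]$.

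I expect the genuine obstacle to be passing from ``coprime for generic $a,b,c,d$'' to ``coprime for every integer specialisation'', and I would handle it by a resultant-driven case analysis. The only possible common divisors of $x,y,z$ are the visible factors $a-u^{2}$, $d-2u$, $n+(2u-c)v^{2}$ and products of these, and for each candidate a single resultant (eliminating $u$, or $v$, or the new variable) decides whether it actually divides all three. Whenever it does, the remedy is to turn the offending square-free factor into a square via a further quadratic substitution for $u$ --- of the shape $2u-d=w^{2}$, or $u=(w^{2}+cv^{2}-n)/(2v^{2})$, or $u=(dw^{2}+cv^{2}-n)/(2(w^{2}+v^{2}))$ --- then cancel the newly created square factor and recheck coprimality with fresh resultants. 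Carrying this through produces a short list of explicit families and shows that the construction succeeds in every case except when $b=2$, $4a=d^{2}$ and $d=0$ hold simultaneously, i.e.\ except $a=d=0$, $b=2$, which is precisely the excluded configuration; the assertion $\gcd_{\Z[u,v]}(x,y,z)\mid 2n$ then follows by inspecting the coefficients of the polynomials obtained.

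For the remaining case $a=d=0$, $b=2$ one has $F=x^{2}+y^{2}+2xy+cyz=(x+y)^{2}+cyz$, and here I would split on whether $n$ is a perfect square. If $n$ is not a square, put $x=y=cnv^{2}$ and $z=1$; then (\ref{diop}) becomes $t^{2}=c^{4}n^{4}v^{6}(4nv^{2}+1)$, and the substitution $u=t/(c^{2}n^{2}v^{3})$ reduces it to the Pell equation $u^{2}-4nv^{2}=1$, which has infinitely many integer solutions, while primitivity is automatic from $z=1$. If $n$ is a square, put $x=y=cp^{2}q^{2}$ and $z=(p^{2}-q^{2})^{2}$; a direct computation gives $nxyz\,F=nc^{4}p^{6}q^{6}(p^{4}-q^{4})^{2}$, a perfect square, and the choice $p=cs+1$, $q=cs$ forces $\gcd(x,y,z)=1$. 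In either situation one obtains infinitely many primitive integer solutions. The main obstacle throughout is the bookkeeping of the third step: obtaining a rational, even generically coprime, polynomial solution is routine, but pushing it to one that is coprime for all integer $a,b,c,d$ --- and pinpointing the single genuinely obstructed family --- is where the work lies.
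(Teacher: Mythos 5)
The statement you are trying to prove is not a theorem of the paper at all: it is stated there as an open \emph{conjecture}, immediately after the authors admit ``we tried quite hard to prove the existence of polynomial solutions of (\ref{diop}) satisfying $\gcd_{\Z[u,v]}(x,y,z)=1$ [and] we failed.'' What your proposal reconstructs --- quite faithfully, in fact --- is the proof of Theorem \ref{F-thm}, which is a strictly weaker result, and the gap between the two is exactly where the difficulty lies. Three things go missing. First, the conjecture concerns an \emph{arbitrary} quadratic form $F\in\Z[x,y,z]$, whereas your entire construction is anchored to the normalised shape $F=x^2+y^2+az^2+bxy+cyz+dxz$: the rational point $(r,w)=(0,npq)$ on the conic $w^2=F(npq,qr,pr)$ exists only because the coefficient of $x^2$ is $1$, so that the constant term in $r$ is the perfect square $n^2p^2q^2$. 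For a general form $\alpha x^2+\beta y^2+\cdots$ the conic over $\Q(p,q)$ need not have any rational point, and your parametrisation never gets off the ground. Second, even within that normalised family your resultant analysis delivers only $\gcd_{\Q[u,v]}(x,y,z)=1$ together with $\gcd_{\Z[u,v]}(x,y,z)\mid 2n$; part (a) of the conjecture demands $\gcd=1$ over $\Z$, and removing a residual constant factor dividing $2n$ is precisely the step nobody knows how to do in general (the paper manages it only for $n=1$, in Corollary \ref{corn1}, by spotting a coefficient equal to $-1$).

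Third, the two parts of the conjecture are not both covered in any of your cases. In the excluded configuration $a=d=0$, $b=2$ your Pell-equation argument produces infinitely many primitive \emph{integer} solutions but no coprime \emph{polynomial} solution, so (a) is open there; conversely, in the generic configuration you produce a polynomial solution but say nothing about how to specialise it to infinitely many primitive integer triples, so (b) is not addressed (a polynomial solution with content dividing $2n$ does not automatically specialise to primitive integer solutions). In short, your argument is a correct account of the paper's Theorem \ref{F-thm} and its proof strategy, but it proves neither the generality (all quadratic forms), nor the integral coprimality in (a), nor part (b) outside the special case --- which is why the paper records this statement as a conjecture rather than a theorem.
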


\begin{rem}
{\rm It seems that essentially the same type of reasoning as presented in the proof of Theorem \ref{F-thm} can be used in order to find co-prime (in $\Q[u,v]$, say) polynomial solutions of the general Diophantine equation of the form
\begin{equation*}
t^2=nx_{1}x_{2}\ldots x_{k}F(x_{1},\ldots,x_{k}),
\end{equation*} where
$F(x_{1},\ldots,x_{k})=\sum_{i\leq j\leq k}a_{ij}x_{i}x_{j}$ is a quadratic form, with, say, $a_{11}=a_{22}=1$ and $a_{ij}\in\Z$. Here we can put
$x_{1}=np_{1}p_{2}, x_{i}=p_{i}p_{i+1}$ for $i=2,\ldots,k$ with the convention $p_{k+1}=p_{1}$ and proceed essentially in the same way as in the proof of Theorem \ref{F-thm} (with respect to $p_{1}$ and $p_{2}$). }
\end{rem}

\section{Parametric solution of the Diophantine equation $T^2=n(X_{1}^5+X_{2}^5+X_{3}^5+X_{4}^5)$}\label{sec3}

The aim of this section is to give an application of the result we get in section \ref{sec2}. More precisely, we present one result concerning the existence of
primitive integer solutions of the Diophantine equations involving fifth powers. However, before we state our result let us note that Mostafa in \cite{Mo}
observed that for
\begin{equation*}
x=5pq, \quad y=5(p^2+pq+q^2),\quad z=-5p(p+q),\quad t=-5q(p+q),
\end{equation*}
we have $x^5+y^5+z^5+t^5=d^2$ with
$d=125pq(p+q)(p^2+pq+q^2)$. The presented solution has two drawbacks. First of all it is not clear how this solution was found (it is clear that this is not a real
drawback). However, the second one is the more important. It is easy to see that $\gcd$ of the polynomials $x, y, z, t\in\Z[p,q]$ is equal to 5. Unfortunately,
it is clear from the shape of $x, y, z, t, d$ that is impossible to eliminate common factor 5 in order to get solution of the equation $x^5+y^5+z^5+t^5=d^2$ in
co-prime polynomials with integer coefficients and without constant common factors. This suggest a natural problem:

\begin{prob} Find solution of the Diophantine equation $T^2=X_{1}^5+X_{2}^5+X_{3}^5+X_{4}^5$ in co-prime polynomials $X_{1}, X_{2}, X_{3}, X_{4}$ and without constant common factor $>1$. \end{prob}

Remarkably with the help of the result we proved in the previous section we can obtain the following:

\begin{thm}\label{fifth}
Let $n\in\Q\setminus\{0\}$ be given. Then the Diophantine equation
\begin{equation}\label{5powers}
T^2=n(X_{1}^5+X_{2}^5+X_{3}^5+X_{4}^5)
\end{equation} has solution in co-prime polynomials $X_{1}, X_{2}, X_{3}, X_{4}, T\in\Z[u,v]$.
\end{thm}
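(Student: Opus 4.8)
The plan is to reduce \eqref{5powers} to the equation treated in Theorem \ref{F-thm} via the identity quoted in the introduction. Recall
\[
F_{n}=(x+y+z)^{2n+1}+(-x+z-y)^{2n+1}+(x-y-z)^{2n+1}+(-x+y-z)^{2n+1},
\]
and the key fact $F_{2}=80\,xyz(x^2+y^2+z^2)$. So if I set $X_{1}=x+y+z$, $X_{2}=-x-y+z$, $X_{3}=x-y-z$, $X_{4}=-x+y-z$, then $X_{1}^5+X_{2}^5+X_{3}^5+X_{4}^5=80\,xyz(x^2+y^2+z^2)$. Hence $T^2=n(X_{1}^5+\dots+X_{4}^5)$ becomes $T^2=80n\cdot xyz(x^2+y^2+z^2)$, which is exactly equation \eqref{diop} with the quadratic form $F(x,y,z)=x^2+y^2+z^2$ (i.e. $a=1$, $b=c=d=0$) and with the constant $80n$ in place of $n$. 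Since $(b-2,4a-d^2,d)=(-2,4,0)\neq(0,0,0)$, the first part of Theorem \ref{F-thm} applies and gives polynomials $x,y,z,t\in\Z[u,v]$ with $\gcd_{\Q[u,v]}(x,y,z)=1$ solving $t^2=80n\cdot xyz(x^2+y^2+z^2)$.

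Next I would translate coprimality of $(x,y,z)$ into coprimality of $(X_{1},X_{2},X_{3},X_{4})$. The linear change of variables $(x,y,z)\mapsto(X_{1},X_{2},X_{3},X_{4})$ above is the transpose of a matrix with entries in $\{0,\pm1\}$; conversely one recovers $2x=X_{1}-X_{2}$, $2y=X_{1}+X_{3}$ (up to sign bookkeeping — more precisely $X_{1}+X_{3}=2x-2y$... I should pin down the exact inverse relations), $2z=X_{1}+X_{2}$, etc., so that $2x,2y,2z$ are $\Z$-linear combinations of the $X_{i}$. Therefore any common polynomial factor of $X_{1},\dots,X_{4}$ in $\Q[u,v]$ divides each of $x,y,z$, forcing it to be a unit; hence $\gcd_{\Q[u,v]}(X_{1},X_{2},X_{3},X_{4})=1$. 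To upgrade this to coprimality over $\Z[u,v]$ I would invoke (the idea of) Corollary \ref{corn1}: the solution produced by Theorem \ref{F-thm} in the case $a=1,b=c=d=0$ is of the shape \eqref{pos3} (we are in the branch $b\neq2$ with $4a=d^2$ false, so actually \eqref{pos1}/\eqref{pos2} branch — I need to check which of \eqref{pos1}–\eqref{pos4} the specialization $a=1,b=c=d=0$ falls into, since $b\neq 2$ and $4a-d^2=4\neq0$ it is the generic case \eqref{pos1} after setting $q=(a-u^2)v$), and there a suitable scaling of the variables $u,v$ makes one coefficient of $x(u,v)$ equal to $\pm1$; by Gauss's lemma coprimality over $\Q[u,v]$ plus a unit coefficient then yields coprimality over $\Z[u,v]$. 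Taking the same scaling in the $X_{i}$ and in $T=t$ preserves everything.

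The handling of the rational constant $n$ is routine: write $n=n_{1}/n_{2}$ with $n_{1},n_{2}\in\Z\setminus\{0\}$; applying the integer case to the integer $80 n_{1} n_{2}$ gives a solution of $t^2=80 n_{1} n_{2}\cdot xyz(x^2+y^2+z^2)$, and then $T^2=n(X_{1}^5+\cdots+X_{4}^5)$ is obtained after multiplying $T$ by $n_{2}$ and absorbing constants — I would spell out that clearing denominators by a square factor does not disturb coprimality of the $X_{i}$ (one only multiplies $T$, not the $X_{i}$, by a constant, after replacing $n$ by $n m^2$ for a suitable integer $m$). I expect the only genuinely delicate point to be the last one I flagged in the second paragraph: verifying that the specialization $a=1$, $b=c=d=0$ really lands in branch \eqref{pos1} and that after an integer rescaling of $(u,v)$ some coefficient of $x$ becomes $\pm1$, so that Corollary \ref{corn1}'s argument applies verbatim and the $X_{i}$ come out coprime over $\Z[u,v]$ and not merely over $\Q[u,v]$; everything else is bookkeeping with the explicit formulas.
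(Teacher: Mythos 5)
Your reduction is exactly the paper's: the substitution $X_1=x+y+z$, $X_2=z-x-y$, $X_3=x-y-z$, $X_4=y-x-z$ together with the identity $F_2=80xyz(x^2+y^2+z^2)$ turns (\ref{5powers}) into (\ref{diop}) for $F=x^2+y^2+z^2$ (the paper keeps the factor $4^2$ as a visible square and applies Theorem \ref{F-thm} with $5n$ rather than $80n$, which is immaterial), and your observation that $X_1+X_3=2x$, $X_1+X_4=2y$, $X_1+X_2=2z$ correctly transfers coprimality over $\Q[u,v]$ from $(x,y,z)$ to the $X_i$. The genuine gap is precisely the step you flag yourself: coprimality over $\Z[u,v]$. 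The device of Corollary \ref{corn1} cannot be invoked here, because that argument needs the constant to be $1$: in branch (\ref{pos1}) one has $x=n(a-u^2)(n+(2u-c)v^2)$, so with $5n$ (or $80n$) in place of $n$ \emph{every} coefficient of $x$ is divisible by $5n$, and no integer rescaling of $u,v$ (which only multiplies coefficients by further integers) can produce a coefficient $\pm1$. Moreover, even if you did establish $\gcd_{\Z[u,v]}(x,y,z)=1$, this would not yield $\gcd_{\Z[u,v]}(X_1,X_2,X_3,X_4)=1$: the inverse relations only express $2x,2y,2z$ as $\Z$-linear combinations of the $X_i$, so the common content of the $X_i$ could still be $2$ --- and for even $n$ it genuinely is $2$, which is why the paper must treat that case separately.

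What the paper actually does is compute the four polynomials $X_i(u,v)$ explicitly (quartics in $v$ whose coefficients are, up to unimodular polynomial factors, $25n^2$, $10n$ and $2$), observe that the gcd of all these coefficients equals $1$ whenever $n$ is odd, and, for $n=2^{\alpha}n_1$ with $n_1$ odd, remove the common power of $2$ by substituting $u\mapsto 2u$, $v\mapsto 2^{\lfloor\alpha/2\rfloor}v$ and dividing each $X_i$ by $2^{2\alpha}$, after which the coefficient free of $v$ is odd. Your handling of rational $n$ (pass to $n=ab$, substitute $v\mapsto bv$, divide by $b^2$) matches the paper once made precise. So the skeleton of your argument is correct and coincides with the paper's, but the integral-coprimality step must be replaced by this direct inspection of contents (or an equivalent computation); as written it rests on a lemma whose hypothesis fails.
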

\begin{proof} First we consider the case when $n$ is a non-zero integer. In order to get the result we put
\begin{equation}\label{Xexpression}
X_{1}=x+y+z, X_{2}=z-x-y, X_{3}=x-y-z, X_{4}=y-x-z.
\end{equation}
With $X_{i}, i=1,2,3,4$, defined in this way we have an
equality
\begin{equation*}
 n(X_{1}^5+X_{2}^5+X_{3}^5+X_{4}^5)=4^2\cdot 5nxyz(x^2+y^2+z^2).
\end{equation*}
We will apply the result obtained in Theorem \ref{F-thm}. In order to do this we replace $n$ by $5n$ and consider $F(x,y,z)=x^2+y^2+z^2$. Then,
following the arguments presented in the proof of Theorem \ref{F-thm} we get the solutions of the equation $(T/4)^2=5nxyzF(x,y,z)$ in the following form:
\begin{align*}
x&=5n(1-u)(1+u)(5n+4uv^2),\\
y&=2(1-u)(1+u)v^2(5nu + 2(1+u^2)v^2),\\
z&=2(5n+4uv^2)(5nu+2(1+u^2)v^2).
\end{align*}
Substituting the expressions
for $x, y, z$ given above into  $X_{i}, i=1,2,3,4$, given by (\ref{Xexpression}), and performing all necessary simplifications we get the solution of the
equation (\ref{5powers}) in the following form:
\begin{align*}
X_{1}&=25n^2 (1+2 u-u^2)+10 n(1+2 u+3 u^2-2 u^3)v^2-2(1 + u^2)(u^2-2u-1)v^4,\\
X_{2}&=25n^2(u^2+2u-1) + 10 n (1 - 2 u + 3 u^2 + 2 u^3)v^2+2 (1 + u^2)(u^2+2u-1)v^4,\\
X_{3}&=25 n^2(1-2 u-u^2)-10n(1 + 3 u^2)v^2+2(1 + u^2)(u^2-2 u-1)v^4,\\
X_{4}&=25n^2(u^2-2u-1) - 10 n (1 + 3 u^2)v^2-2(1 + u^2) (u^2+2u-1)v^4,
\end{align*}
 with $T$ given by
 \begin{equation*}
 T=10n(u^2-1)v(25 n^2 (1 + u^2)+10nu(3 + u^2)v^2+2(1 + u^2)^2v^4)(X_{1}+X_{2}).
\end{equation*}
It is clear that if $n\equiv 1\pmod{2}$ then the polynomials $X_{1}, X_{2}, X_{3}, X_{4}$ have no common constant factor. Indeed, in this case the $\gcd$ of the coefficients of the polynomial $X_{i}$ for $i=1, 2, 3, 4$ is 1. However, in case $n\equiv 0\pmod{2}$, they have
constant common factor $d=2$. Fortunately, we can easily adjust our solution in order to get co-prime polynomials in this case too. Indeed, if $n=2^{\alpha}n_{1}$ with $n_{1}\equiv 1\pmod{2}$, we replace $u$ by
$2u$ and $v$ by $2^{\beta}v$, where $\beta= \lfloor\frac{\alpha}{2}\rfloor$. After this substitution the polynomial $X_{i}$ for $i=1,2,3,4$, is divisible by
$2^{2\alpha}$, and thus we can eliminate this common factor. Then we get that the coefficient of the polynomial $X_{i}$ free of $v$ is odd for all $u\in\Z$ and
$i=1,2,3,4$. We thus get the statement in case of $n\in\Z\setminus\{0\}$.

In order to get the result for a rational $n$ we write $n=a/b$ with $a, b\in\Z\setminus\{0\}$ and $\gcd(a,b)=1$. From the above we know that the equation under
consideration has polynomial solution $X_{1},\ldots, X_{4}$ for $n=ab$. One can easily check that $X_{1}(u,bv)/b^2,\ldots,X_{4}(u,bv)/b^{2}$ are polynomials with
integer coefficients and  they solve the equation (\ref{5powers}) with $n=a/b$. Our result follows.

\end{proof}

As an immediate consequence from the above theorem we get the following:

\begin{cor}
Let $n\in\Q\setminus\{0\}$ be given. Then the Diophantine equation {\rm (\ref{5powers})} has infinitely many solutions in co-prime integers $X_{1}, X_{2}, X_{3}, X_{4}$.
\end{cor}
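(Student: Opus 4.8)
The plan is to obtain the corollary from Theorem~\ref{fifth} by specialising the parameters of the parametric solution to integers, while keeping co-primality for infinitely many of the specialised quadruples. Start from the co-prime polynomials $X_{1},X_{2},X_{3},X_{4},T\in\Z[u,v]$ produced in Theorem~\ref{fifth} (concretely, in the explicit form written out in its proof), which satisfy $T^2=n(X_{1}^5+X_{2}^5+X_{3}^5+X_{4}^5)$ identically.

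The first step is to pass to a one-parameter subfamily. Since $\gcd_{\Q[u,v]}(X_{1},X_{2},X_{3},X_{4})=1$, no irreducible polynomial divides all four of the $X_{i}$, so their common zero set in $\overline{\Q}^2$ contains no curve and is therefore finite. Choose $v_{0}\in\Z$ different from the (finitely many) second coordinates of the points of this set, and such that the content issue discussed below is avoided; then $P_{i}(u):=X_{i}(u,v_{0})\in\Z[u]$ for $i=1,2,3,4$ have no common root in $\overline{\Q}$, i.e. $\gcd_{\Q[u]}(P_{1},P_{2},P_{3},P_{4})=1$, while $Q(u):=T(u,v_{0})$ still satisfies $Q^2=n(P_{1}^5+P_{2}^5+P_{3}^5+P_{4}^5)$.

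The second step is a B\'ezout argument. From $\gcd_{\Q[u]}(P_{1},\ldots,P_{4})=1$, running the Euclidean algorithm in $\Q[u]$ and clearing denominators yields $A_{1},\ldots,A_{4}\in\Z[u]$ and a positive integer $D$ with $A_{1}P_{1}+\cdots+A_{4}P_{4}=D$. Hence for every $u_{0}\in\Z$ the integer $g(u_{0}):=\gcd(P_{1}(u_{0}),\ldots,P_{4}(u_{0}))$ divides $D$, while $(P_{1}(u_{0}),P_{2}(u_{0}),P_{3}(u_{0}),P_{4}(u_{0}),Q(u_{0}))$ is automatically an integer solution of~(\ref{5powers}). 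It remains to see that $g(u_{0})=1$ for infinitely many $u_{0}$: for a prime $p\mid D$, the condition $p\mid g(u_{0})$ forces $u_{0}\bmod p$ to be a common root of the reductions $\overline{P_{1}},\ldots,\overline{P_{4}}\in\F_{p}[u]$, and — provided $p$ does not divide the content of the tuple $(P_{1},\ldots,P_{4})$ — these reductions are not all zero, so $p\mid g(u_{0})$ rules out only finitely many residue classes of $u_{0}$ modulo $p$. Intersecting the resulting arithmetic progressions over the finitely many primes $p\mid D$ gives $g(u_{0})=1$ for all $u_{0}$ in a non-empty union of arithmetic progressions; since the $P_{i}$ are non-constant the first coordinates $P_{1}(u_{0})$ are unbounded, so infinitely many pairwise distinct co-prime quadruples arise, and the corollary follows.

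The point that has to be handled with care — and which I expect to be the only real obstacle — is the content obstruction in the second step: the chosen $v_{0}$ must be such that, for every prime $p\mid D$, the polynomials $X_{1}(u,v_{0}),\ldots,X_{4}(u,v_{0})$ are not all $\equiv 0\pmod p$. For large $p$ (say $p$ exceeding $\max_{i}\deg_{v}X_{i}$) this holds for all but finitely many residues $v_{0}\bmod p$, because a prime dividing all four specialised polynomials would have to divide some $X_{i}(u,v_{0})$ with $\overline{X_{i}}\neq 0$ in $\F_{p}[u,v]$, which happens only for finitely many $v_{0}\bmod p$; for the finitely many small primes it can either be checked directly from the explicit coefficients of the $X_{i}$ (for instance, when $n$ is odd the coefficient of $u^{2}$ in $X_{1}(u,0)$ is $-25n^{2}$, which is odd, so $p=2$ causes no trouble) or arranged by the same rescaling $u\mapsto ku$, $v\mapsto k^{j}v$ that is used at the end of the proof of Theorem~\ref{fifth}. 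With a legitimate $v_{0}$ secured, the remainder of the argument is routine.
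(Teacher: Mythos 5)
Your route is genuinely different from the paper's: instead of exhibiting an explicit integral B\'ezout identity, you argue softly (finite common zero locus, a relation $\sum A_iP_i=D$ over $\Z[u]$, and a sieve over the primes dividing $D$). The strategy is reasonable in outline, but the sieve step has a genuine gap. For a prime $p\mid D$, the fact that the reductions $\overline{P_1},\ldots,\overline{P_4}\in\F_p[u]$ are not all zero only tells you that the bad residues $u_0\bmod p$ lie among the at most $\deg P_i$ roots of some nonzero $\overline{P_i}$; when $p\leq \deg P_i$ this can be \emph{all} of $\F_p$, so "rules out only finitely many residue classes" does not give you a surviving residue class, and the intersection of your arithmetic progressions may be empty. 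Concretely, $P_1=u^2+u$ and $P_2=u^2+u+2$ are coprime in $\Q[u]$ and the tuple has content $1$, yet $2\mid\gcd(P_1(u_0),P_2(u_0))$ for every $u_0\in\Z$: coprimality over $\Q[u]$ plus trivial content does not imply that any specialization is coprime. The check you propose for small primes is exactly the content condition (a coefficient of $X_1(u,0)$ being odd), which is the wrong condition --- $u^2+u$ has odd leading coefficient but only even values. What is actually needed, for each small prime $p\mid D$, is an explicit residue $u_0\bmod p$ with some $P_i(u_0)\not\equiv 0\pmod p$; for $p=2$ and $n$ odd this does hold (e.g.\ $X_1(0,0)=25n^2$ is odd, so the constant term, not the $u^2$-coefficient, is the relevant one), but you would have to compute $D$ and carry out this verification prime by prime, including for the rescaled family you invoke when $n$ is even. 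As written, the proof does not close.

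For comparison, the paper avoids all of this casework with one stroke: it specializes $u=0$, $v=5bt$ (writing $n=a/b$), normalizes $Y_i:=(5n)^{-2}X_i\in\Z[a,b,t]$, and exhibits the explicit identity $-100abt^2Y_1+Y_2-(1+130abt^2)Y_3-3(1+10abt^2)Y_4=1$ with coefficients in $\Z[a,b,t]$, which forces $\gcd(Y_1,Y_2,Y_3,Y_4)=1$ for every integer specialization simultaneously. If you want to keep your softer argument, the clean repair is the same in spirit: after specializing $v$, find an identity $\sum A_iP_i=1$ with $A_i\in\Z[u]$ (not merely $=D$), or else verify the fixed divisor of the tuple directly at the finitely many primes dividing $D$.
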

\begin{proof}
Let $n=a/b$ and put $u=0$. Next, let us put $v=5bt$. After this substitution the expression $Y_{i}(a,b,t):=(5n)^{-2}X_{i}$ is a polynomial in $\Z[a,b,t]$. It is also clear that $Y_{1}, Y_{2}, Y_{3}, Y_{4}$ satisfy the equation (\ref{5powers}). Moreover, the following identity holds:
$$
-100abt^2Y_{1}(a,b,t)+Y_{2}(a,b,t)-(1 + 130 a b t^2)Y_{3}(a,b,t)-3 (1 + 10 a b t^2)Y_{4}(a,b,t)=1
$$
which implies that for each $a, b, t\in\Z$ we have $\gcd(Y_{1}, Y_{2}, Y_{3}, Y_{4})=1$.
\end{proof}

\begin{rem} {\rm It is worth to note that the sequence of polynomials
\begin{equation*}
F_{n}=(x+y+z)^{2n+1}+(-x+z-y)^{2n+1}+(x-y-z)^{2n+1}+(-x+y-z)^{2n+1},\quad n\in\N_{+},
\end{equation*}
possesses an interesting arithmetic property. However, before we state the mentioned property we introduce the notation of a $p$-adic "valuation" of a polynomial $F\in\Z[\overline{X}]$ of degree $d$, where $\overline{X}=(x_{1},\ldots, x_{k})$ and $p$ is a prime number. In order to define this
quantity let us write
\begin{equation*}
F(\overline{X})=\sum_{|\alpha|\leq d}a_{\alpha}\overline{X}^{\alpha},
\end{equation*}
where $\alpha=(\alpha_{1},\ldots,\alpha_{k})\in\N ^{k}, |\alpha|=\sum_{i=1}^{k}\alpha_{i}$ and $\overline{X}^{\alpha}=x_{1}^{\alpha_{1}}\cdot\ldots\cdot
x_{k}^{\alpha_{k}}$. Then we define $p$-adic valuation of the polynomial $F$ as $\phi_{p}(F)$, where
\begin{equation*}
\phi_{p}(F)=\op{max}\{k\in\N:\;p^{k}|a_{\alpha}\;\mbox{for all}\;\alpha\;\mbox{for which}\;a_{\alpha}\neq 0 \},
\end{equation*}
i.e. the non-negative number
$\phi_{p}(F)$ is just the highest power of $p$ such that the polynomial $F/p^{\phi_{p}(F)}$ has integer coefficients. We also define $\phi_{p}(0)=\infty$. Then,
one can also write an useful identity
\begin{equation*}
\phi_{p}(F)=\op{min}\{\nu_{p}(a_{\alpha})\},
\end{equation*} where the minimum is taken over all
coefficients of the polynomial $F$. It is clear that $\phi_{p}(n)=\nu_{p}(n)$ for $n\in\Z$, where $\nu_{p}(n)$ is the usual $p$-adic valuation of an integer $n$.
In the sequel we will need some properties of $\nu_{p}(n)$. Let us recall that for $n\in\N$ we have
\begin{equation*}
\nu_{p}(n!)=\sum_{i=1}^{\infty}\left\lfloor\frac{n}{p^{i}}\right\rfloor=\frac{n-s_{p}(n)}{p-1},
\end{equation*}
where $s_{p}(n)$ is the sum of digits function,
i.e. if $n=\sum_{i=0}^{k}\alpha_{i}p^{i}$ is the representation of $n$ in base $p$, then $s_{p}(n)=\sum_{i=0}^{k}\alpha_{i}$. This function satisfies the
following recurrence relation: $s_{p}(i)=i$ for $i=0,1,\ldots,p-1$ and
\begin{equation*}
s_{p}(ap+b)=s_{p}(a)+b
\end{equation*}
for $b\in\{0,\ldots,p-1\}$. From
the expression for $\nu_{p}(n!)$ we easily deduce the identity
\begin{equation*}
\nu_{p}(n)=\nu_{p}\left(\frac{n!}{(n-1)!}\right)=\frac{s_{p}(n-1)-s_{p}(n)+1}{p-1}.
\end{equation*}
Because $\nu_{p}(n)\geq 0$ for $n\in\N$ we get an useful
inequality
\begin{equation*}
s_{p}(n-1)+1\geq s_{p}(n)
\end{equation*}
for $n\in\N$.

We are ready to prove the following:

\begin{thm} Let $n\in\N_{+}$ and $p$ be a prime number. Then we have the following identity: \begin{equation*} \phi_{p}(F_{n})=\begin{cases}\begin{array}{lll}
                              \nu_{2}(n)+3, &  & \mbox{if}\;p=2, \\
                              1           , &  & \mbox{if}\;2n+1=p^{m}\;\mbox{for some}\;m\in\N_{+}\;\mbox{and}\;p>2,\\
                              0           , &  & \mbox{otherwise}.
                            \end{array}
\end{cases} \end{equation*} Moreover, $F_{n}(x,y,z)\equiv 0\pmod{xyz}$. \end{thm}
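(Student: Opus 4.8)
The plan is to first turn $F_n$ into an explicit monomial expansion, after which the whole statement reduces to estimating $p$-adic valuations of multinomial coefficients, and then to carry out that estimate separately for $p=2$, for $2n+1=p^m$, and for the remaining odd $p$.

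First I would note that the four linear forms occurring in $F_n$ are exactly $\eps_1 x+\eps_2 y+\eps_3 z$ as $\eps=(\eps_1,\eps_2,\eps_3)$ runs over the set $E$ of sign vectors in $\{\pm1\}^3$ with $\eps_1\eps_2\eps_3=1$. Expanding each $(2n+1)$-st power by the multinomial theorem and summing, the coefficient of $x^iy^jz^k$ (with $i+j+k=2n+1$) is $\binom{2n+1}{i,j,k}\sum_{\eps\in E}\eps_1^i\eps_2^j\eps_3^k$. Writing the indicator of $E$ as $(1+\eps_1\eps_2\eps_3)/2$ and using $\sum_{\delta=\pm1}\delta^e\in\{0,2\}$ according to the parity of $e$, one finds $\sum_{\eps\in E}\eps_1^i\eps_2^j\eps_3^k=4$ when $i,j,k$ are all even or all odd and $0$ otherwise; since $i+j+k=2n+1$ is odd the "all even" option cannot occur, so the coefficient equals $4\binom{2n+1}{i,j,k}$ when $i,j,k$ are all odd and vanishes otherwise. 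In particular every monomial of $F_n$ is divisible by $xyz$, which is the last assertion, and (there being no cancellation, distinct triples giving distinct monomials)
$$\phi_p(F_n)=\nu_p(4)+\min\Big\{\nu_p\binom{2n+1}{i,j,k}\;:\;i,j,k\ge 1\ \text{odd},\ i+j+k=2n+1\Big\}.$$

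Next I would invoke the formula for $\nu_p(n!)$ recalled above in the form $\nu_p\binom{d}{i,j,k}=\frac{s_p(i)+s_p(j)+s_p(k)-s_p(d)}{p-1}$, so the task becomes to minimize $s_p(i)+s_p(j)+s_p(k)$ over odd positive triples summing to $d:=2n+1$; by subadditivity of $s_p$ this is $\ge s_p(d)$, with equality exactly when $i+j+k=d$ has no carries in base $p$. For $p=2$, writing $i=2i'+1,\ j=2j'+1,\ k=2k'+1$ with $i'+j'+k'=n-1$ turns the quantity into $3+s_2(i')+s_2(j')+s_2(k')$, whose minimum is $3+s_2(n-1)$, attained at $(n-1,0,0)$; feeding this back, together with $\nu_2(4)=2$, $s_2(2n+1)=s_2(n)+1$ and the identity $\nu_2(n)=s_2(n-1)-s_2(n)+1$, gives $\phi_2(F_n)=\nu_2(n)+3$. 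For odd $p$ we have $\nu_p(4)=0$, so it remains to decide when the above minimum is $0$: since for odd $p$ an integer is odd iff its base-$p$ digit sum is odd, a carry-free odd decomposition of $d$ amounts to distributing the $s_p(d)$ digit-units of $d$ (each kept at its level) into three bins each receiving an odd number of units. If $s_p(d)=1$, i.e. $d=p^m$, this is impossible (one bin takes everything, the others get $0$), while $(i,j,k)=(p^{m-1},p^{m-1},(p-2)p^{m-1})$ is a valid odd triple with $\nu_p\binom{d}{i,j,k}=1$, so $\phi_p(F_n)=1$. If $d\ne p^m$, then $s_p(d)$ is odd and $\ne1$, hence $\ge 3$, and one splits explicitly: when $d$ has a single nonzero digit $a_L\ (\ge 3)$ take $(p^L,p^L,(a_L-2)p^L)$, and when $d$ has (at least) two nonzero digits $a_L,a_s$ take $(p^L,p^s,d-p^L-p^s)$, the last part being odd and hence positive since $d=p^L+p^s$ would be even; in either case the decomposition is carry-free, so $\phi_p(F_n)=0$.

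I expect the main obstacle to be precisely the odd-$p$ "otherwise" case: getting the equivalence "odd $\iff$ odd base-$p$ digit sum" to interact correctly with the digit-by-digit (carry-free) constraint, and verifying that the exhibited decompositions genuinely have odd, positive entries in all degenerate configurations (small leading digit, $d<p$, etc.). The $p=2$ computation and the $d=p^m$ lower bound are then essentially bookkeeping with $s_p$ and the valuation formula for $n!$.
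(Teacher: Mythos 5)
Your proof is correct and follows essentially the same route as the paper: both reduce the statement, via an explicit expansion showing that the nonzero coefficients of $F_n$ are exactly $4\binom{2n+1}{i,j,k}$ with $i,j,k$ odd (whence the divisibility by $xyz$), to minimizing $s_p(i)+s_p(j)+s_p(k)$ over such triples, and both use the same witnesses (e.g. $((p-2)p^{m-1},p^{m-1},p^{m-1})$ when $2n+1=p^m$, and the one- versus two-nonzero-digit split otherwise). The only cosmetic differences are that you certify the odd-$p$ cases via Kummer's carry criterion where the paper invokes Lucas's theorem, and that your sign-vector character sum replaces the paper's direct pairing of the four linear forms in deriving the expansion.
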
 \begin{proof} We start with the computation of the coefficients of the
polynomial $F_{n}$. In order to shorten the notation we put $p=y+z, q=-y+z$. We then have
\begin{align*}
F_{n}&=(x+p)^{2n+1}+(-x+q)^{2n+1}+(x-p)^{2n+1}+(-x-q)^{2n+1}\\
     &=\sum_{i=0}^{2n+1}\binom{2n+1}{i}(p^{2n+1-i}+(-1)^{i}q^{2n+1-i}+(-p)^{2n+1-i}+(-1)^{i}(-q)^{2n+1-i})x^{i}\\
     &=\sum_{i=1}^{2n+1}\binom{2n+1}{i}(p^{2n+1-i}+(-1)^{i}q^{2n+1-i}+(-p)^{2n+1-i}+(-1)^{i}(-q)^{2n+1-i})x^{i}\\
     &=2\sum_{i=0}^{n-1}\binom{2n+1}{2i+1}(p^{2(n-i)}-q^{2(n-i)})x^{2i+1}\\
     &=2\sum_{i=0}^{n-1}\binom{2n+1}{2i+1}((y+z)^{2(n-i)}-(z-y)^{2(n-i)})x^{2i+1}\\
     &=2\sum_{i=0}^{n-1}\binom{2n+1}{2i+1}\sum_{j=0}^{2(n-i)}\binom{2(n-i)}{j}(z^{j}-(-1)^{j}z^{j})y^{2(n-i)-j}x^{2i+1}\\
     &=4\sum_{i=0}^{n-1}\sum_{j=0}^{n-i-1}\binom{2n+1}{2i+1}\binom{2(n-i)}{2j+1}x^{2i+1}y^{2(n-i-j-1)+1}z^{2j+1}.
\end{align*}
We observe that $(2i_{1}+1,2(n-i_{1}-j_{1}-1)+1,2j_{1}+1)=(2i_{2}+1,2(n-i_{2}-j_{2}-1)+1,2j_{2}+1)$ if and only if $(i_{1},j_{1})=(i_{2},j_{2})$ and
thus we have an equality
\begin{equation*}
\phi_{p}(F_{n})=\op{min}\{\nu_{p}\left(\binom{2n+1}{2i+1}\binom{2(n-i)}{2j+1}\right):\;i=0,\ldots,n-1;
j=0,\ldots,n-i-1\}+2.
\end{equation*}

In order to get the statement of our theorem we start with $p=2$. First of all let us note that if $i=j=0$ then
$$
\nu_{2}\left(\binom{2n+1}{2i+1}\binom{2(n-i)}{2j+1}\right)=\nu_{2}((2n+1)(2n))=\nu_{2}(n)+1,
$$
and thus $\phi_{p}(F_{n})\leq \nu_{2}(n)+1$ for $n\in\N_{+}$.
Let us note that the opposite inequality is equivalent with the proof that for $n\in\N_{+}$ and each $i\in\{0,\ldots,n-1\},\; j\in\{0,\ldots,n-i-1\}$ we have
\begin{equation}\label{s2}
 s_{2}(i)+s_{2}(j)+s_{2}(n-i-j-1)\geq s_{2}(n-1).
\end{equation} This equivalence follows from the expression of $\nu_{2}\left(\binom{2n+1}{2i+1}\binom{2(n-i)}{2j+1}\right)$ in terms of sum of binary digits
function. Inequality (\ref{s2}) is an obvious consequence of a well-known and simple fact that  $s_2(a)+s_2(b) \geq s_2(a+b)$ for all positive integers $a,b$.
Indeed, we have
\begin{equation*}
 0 \leq \nu_2 \left(\binom{a+b}{a} \right) = \nu_2((a+b)!)-\nu_2(a!)-\nu_2(b!)= s_2(a)+s_2(b)-s_2(a+b).
\end{equation*}

Now let us take $p>2$. If $2n+1 = p^m$, then for $n-i = p^{m-1}$ and $2j+1=p^{m-1}$ we get
\begin{equation*}
 \nu_p\left(\binom{2n+1}{2(n-i)}\binom{2(n-i)}{2j+1}\right)=\nu_{p}\left(\binom{p^m}{2p^{m-1}}\binom{2p^{m-1}}{p^{m-1}}\right) = 1.
\end{equation*}
Indeed, we have
\begin{equation*}
 \binom{p^m}{2p^{m-1}} = \frac{p^m}{2p^{m-1}}\binom{p^m-1}{2p^{m-1}-1} = \frac{p}{2}\binom{p^m-1}{2p^{m-1}-1}
\end{equation*}
and we have the following expansions in base $p$:
$$
p^m-1=\overline{(p-1)\ldots(p-1)},\quad 2p^{m-1}-1 = \overline{1(p-1)(p-1)\ldots(p-1)}.
$$
Therefore from Lucas Theorem we get
\begin{equation*}
 \binom{p^m-1}{2p^{m-1}-1} \equiv \binom{p-1}{1} \binom{p-1}{p-1} \ldots \binom{p-1}{p-1} \equiv p-1 \pmod p.
\end{equation*}
Also from Lucas theorem we get $\binom{2p^{m-1}}{p^{m-1}} \equiv \binom{2}{1} \equiv 2 \pmod p$. We proved that $\phi_{p}(F_{n}) \leq 1$. On the other
hand for all $i < n$ we have $p | \binom{p^m}{2i+1} = \frac{p^m}{2i+1} \binom{p^m-1}{2i}$, therefore $\phi_{p}(F_{n})=1$.

Now let us take $n$ such that $2n+1 \neq p^{m}$. There are two possibilities, either $2n+1 = up^m$ for $3 \leq u < p$, or in $p$ base expansion $2n+1$ has at
least two non-zero digits. In the first case we take $n-i = p^m$ and $2j+1=p^m$ and use Lucas theorem to get
\begin{equation*}
 \binom{2n+1}{2(n-i)} \binom{2(n-i)}{2j+1} = \binom{up^m}{2p^m}\binom{2p^m}{p^m} \equiv \binom{u}{2} \binom{2}{1} \not\equiv 0 \pmod p.
\end{equation*}
In the second case let $2n+1$ has non-zero digits on positions $a$ and $b$, where $a>b$. We take $2(n-i) = p^a+p^b$ and $2j+1=p^b$. Again from
Lucas Theorem we get that $p \nmid \binom{2n+1}{p^a+p^b} \binom{p^a+p^b}{p^b}$. We proved that $\phi_{p}(F_{n})=0$.

Finally, in order to get the second property of $F_{n}$ we observe that $F_{n}(0,y,z)=F_{n}(x,0,z)=F_{n}(x,y,0)=0$ and thus $F_{n}(x,y,z)\equiv 0\pmod{xyz}$.
\end{proof}

Based on small numerical experiments we believe that the following is true:

\begin{conj}
Let $n\in\N_{+}$ and put $\phi(F_{n})=\prod_{p\mid n}p^{\phi_{p}(F_{n})}$, where the product is taken over prime numbers dividing $n$. Then, the
polynomial $$ \frac{F_{n}(x,y,z)}{\phi(F_{n})xyz} $$ is irreducible in $\Z[x,y,z]$.
\end{conj}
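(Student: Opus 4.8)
We outline a possible approach to this conjecture, indicating where the essential difficulty seems to lie. (We assume $n\ge 2$; for $n=1$ one has $F_1=\Phi_1:=a^{3}+b^{3}+c^{3}-(a+b+c)^{3}=24xyz$ with $a,b,c$ as below, and the statement is vacuous.) First a remark on the statement: taken literally, $\phi(F_n)=\prod_{p\mid n}p^{\phi_p(F_n)}$ need not make the quotient primitive — for odd $n$ this product equals $1$ while $8\mid c(F_n)$ — so we read $\phi(F_n)$ as the content $c(F_n)=\prod_p p^{\phi_p(F_n)}$ of $F_n$ (which is what the formula presumably intends). By Gauss's lemma the conjecture is then equivalent to: $F_n(x,y,z)/(xyz)$ is irreducible over $\Q$.

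The plan is to reduce this to a one--variable irreducibility statement. Put $a=x+y+z$, $b=-x-y+z$, $c=x-y-z$; then the fourth linear form occurring in $F_n$ equals $-x+y-z=-(a+b+c)$, and since $2n+1$ is odd,
\begin{equation*}
F_n(x,y,z)=a^{2n+1}+b^{2n+1}+c^{2n+1}-(a+b+c)^{2n+1}=:\Phi_n(a,b,c).
\end{equation*}
The substitution $(x,y,z)\mapsto(a,b,c)$ is invertible over $\Q$, and $a+b=2z$, $b+c=-2y$, $c+a=2x$, so $(a+b)(b+c)(c+a)=-8xyz$. By the theorem above $xyz\mid F_n$, hence $(a+b)(b+c)(c+a)\mid\Phi_n$ in $\Q[a,b,c]$, and (Gauss, as $(a+b)(b+c)(c+a)$ is primitive) $\Phi_n=(a+b)(b+c)(c+a)\,\Psi_n$ with $\Psi_n\in\Z[a,b,c]$ homogeneous of degree $2n-2$. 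Since $F_n/(xyz)=-8\,\Psi_n(a,b,c)$, the conjecture becomes: the polynomial
\begin{equation*}
\Psi_n=\frac{a^{2n+1}+b^{2n+1}+c^{2n+1}-(a+b+c)^{2n+1}}{(a+b)(b+c)(c+a)}
\end{equation*}
is irreducible over $\Q$. To pass from three variables to one, I would use the following elementary observation: if $g\in\Q[a,b,c]$ is homogeneous of degree $D\ge 1$, $c\nmid g$, and the univariate polynomial $g(a,1,0)$ is irreducible of degree $D$, then $g$ is irreducible. Indeed, in a factorisation $g=g_1g_2$ with $\deg g_i=d_i\ge 1$ one has $c\nmid g_i$, so $g_i(a,b,0)\ne 0$ is a binary form of degree $d_i$ and $g_i(a,1,0)\ne 0$; as $g(a,1,0)=g_1(a,1,0)g_2(a,1,0)$ is irreducible, one factor, say $g_2(a,1,0)$, is a non-zero constant, whence $D=\deg g_1(a,1,0)\le d_1$ and $d_2=D-d_1\le 0$, a contradiction. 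For $g=\Psi_n$ one checks $c\nmid\Psi_n$ since $\Psi_n(a,b,0)=\bigl(a^{2n+1}+b^{2n+1}-(a+b)^{2n+1}\bigr)/\bigl(ab(a+b)\bigr)\ne 0$, and a short binomial computation gives that the monomial $a^{2n-2}$ occurs in $\Psi_n$ with coefficient $-(2n+1)$, so $\Psi_n(a,1,0)$ has degree exactly $2n-2$. Hence the whole conjecture follows from the irreducibility over $\Q$ of
\begin{equation*}
\Psi_n(a,1,0)=-\frac{(a+1)^{2n+1}-a^{2n+1}-1}{a(a+1)},
\end{equation*}
which is, up to a non-zero constant, the Cauchy--Liouville--Mirimanoff polynomial of index $2n+1$.

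This is where the main obstacle lies. That polynomial is classically known to be irreducible over $\Q$ when $2n+1$ is prime (irreducibility is also known for some further indices, e.g. $2n+1\in\{2p,3p\}$), so the conjecture is unconditionally true for all such $n$; but for a general composite index its irreducibility is exactly the long--standing Cauchy--Mirimanoff irreducibility conjecture, still open. Thus settling the conjecture for all $n$ by this route would require resolving that open problem, so one would have to avoid the specialisation $c=0$ — for instance by proving absolute irreducibility of the plane curve $C_n\colon\Psi_n=0$ directly. For that one would analyse $\op{Sing}(C_n)$, which is governed by the common zeros of the partials of $\Phi_n$, namely the points with $a^{2n}=b^{2n}=c^{2n}=(a+b+c)^{2n}$, together with the intersections of $C_n$ with the three lines $a+b=0$, $b+c=0$, $c+a=0$; one would then rule out every splitting $C_n=D_1\cup D_2$ on the grounds that $D_1$ and $D_2$ must meet (B\'ezout) in a point whose local structure on $C_n$ is too simple to be a crossing of two components. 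I expect this singularity analysis, carried out uniformly in $n$, to be the technically hardest ingredient. Note that purely combinatorial (Ostrowski--type) criteria cannot help: the Newton polytope of $\Psi_n$ is a triangle (the standard one dilated by $2n-2$), hence Minkowski--decomposable for $n\ge 2$, so any proof must use genuine arithmetic or geometric input.
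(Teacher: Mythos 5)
This statement is an open conjecture in the paper --- the authors support it only by ``small numerical experiments'' --- so there is no proof of record to compare against, and your proposal, by your own admission, does not close it either. The change of variables $a=x+y+z$, $b=-x-y+z$, $c=x-y-z$, the identity $F_n=a^{2n+1}+b^{2n+1}+c^{2n+1}-(a+b+c)^{2n+1}$ with $(a+b)(b+c)(c+a)=-8xyz$, and your observation that the literal normalisation $\prod_{p\mid n}p^{\phi_p(F_n)}$ fails to make the quotient primitive (it equals $1$ for odd $n$, while the theorem in the paper gives $\phi_2(F_n)=\nu_2(n)+3\geq 3$) are all correct and clarify what the conjecture really asserts.

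There is, however, a concrete error at the one point where you claim an unconditional result. The specialisation $\Psi_n(a,1,0)=-\bigl((a+1)^{2n+1}-a^{2n+1}-1\bigr)/\bigl(a(a+1)\bigr)$ is \emph{not} the Cauchy--Liouville--Mirimanoff polynomial $E_{2n+1}$: that polynomial is obtained only after also dividing out $(a^2+a+1)^{\delta}$, where $\delta=2$ if $2n+1\equiv 1\pmod 6$ and $\delta=1$ if $2n+1\equiv 5\pmod 6$ (for a primitive cube root of unity $\omega$ one has $(\omega+1)^{m}-\omega^{m}-1=-\omega^{2m}-\omega^{m}-1=0$ whenever $3\nmid m$). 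Consequently, for every $n$ with $3\nmid 2n+1$ the polynomial $\Psi_n(a,1,0)$ carries the factor $a^2+a+1$, and it is genuinely reducible over $\Q$ as soon as $2n-2>2\delta$ or $\delta=2$; already for $n=3$ one computes $\Psi_3(a,1,0)=-7(a^2+a+1)^2$. In all such cases the hypothesis of your specialisation lemma fails and the lemma yields nothing. In particular, since every prime $p=2n+1\geq 5$ satisfies $3\nmid p$, your claim that the conjecture is ``unconditionally true'' for prime $2n+1$ does not follow from the argument given (it breaks at $n=3$). Moreover, even for the corrected target $E_{2n+1}$, Mirimanoff's irreducibility conjecture for prime index is itself still open --- it is not classical knowledge --- with only scattered families of indices settled. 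The specialisation route is clean only when $n\equiv 1\pmod 3$, i.e.\ when $3\mid 2n+1$ and $\delta=0$; everywhere else one is forced into the geometric programme sketched in your final paragraph, which remains entirely to be carried out. So the proposal neither proves the conjecture nor correctly delimits the cases its reduction can reach.
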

}
\end{rem}

\section{Some remarks concerning the equation $T^2=aX_{1}^5+bX_{2}^5+cX_{3}^5+dX_{4}^5$}\label{sec4}

Motivated by Theorem \ref{fifth} obtained in section \ref{sec3} one can ask whether it is possible to prove more general results concerning the Diophantine equation of the form
\begin{equation}\label{quineq}
T^2=aX_{1}^5+bX_{2}^5+cX_{3}^5+dX_{4}^5=:G(\overline{X}), \quad \overline{X}=(X_{1},X_{2},X_{3},X_{4}),
\end{equation}
where $a, b, c, d\in\Z$ are given. It is interesting that according to our best knowledge there is no result concerning the existence of primitive integer solutions of the equation (\ref{quineq}) where at least three among the integers $a, b, c, d$ are non-zero. The aim of this
section is presentation of a method which sometimes will allow us to find infinitely many primitive solutions in integers of the equation (\ref{quineq}). In
order to present our method we assume that there are non-zero integers $x_{1}, x_{2}, x_{3}, x_{4}$ satisfying the Diophantine equation
\begin{equation*}
aX_{1}^5+bX_{2}^5+cX_{3}^5+dX_{4}^5=0.
\end{equation*}

We should note that this condition is quite restrictive due to the fact that we do not even know whether there is an example of integers $a, b, c, d$ such that the above equation has infinitely many solutions in co-prime integers.

We put
\begin{equation}\label{specialsub}
X_{1}=pU+x_{1}V,\quad X_{2}=qU+x_{2}V,\quad X_{3}=rU+x_{3}V, \quad X_{4}=sU+x_{4}V
\end{equation}
and look for the values of $p, q, r, s$ such that the homogenous polynomial $G(\overline{X})$ with $X_{i}, i=1,2,3,4,$ given by (\ref{specialsub}), is divisible by a square of a linear polynomial. In this case we have $G(x,y,z,w)=\sum_{i=1}^{5}C_{i}U^{i}V^{5-i}$, where
\begin{equation*}
C_{i}={5\choose i}(apx_{1}^{5-i}+bqx_{2}^{5-i}+crx_{3}^{5-i}+dsx_{4}^{5-i})\in\Z[p,q,r,s]
\end{equation*}
for $i=1, 2, 3, 4, 5$. Thus if $C_{1}=0$ then $G(x,y,z,w)$ will be divisible by $U^2$. We put
\begin{equation*}
s=-\frac{a p x_1^4+b q x_2^4+crx_{3}^4}{dx_4^4}.
\end{equation*}
After this substitution the polynomial $G$ (treated as a polynomial in variables $U, V$ with coefficients dependent on $x_{0}, y_{0},
z_{0}, w_{0}$ and $p, q$) is divisible by $U^2$. Thus, it can be written as $G(x,y,z)=G(U,V)=U^2H(U,V)$, where $H$ is the homogenous form, in variables $U, V$, of degree 3. The form $H$ is rather complicated and thus we do not present it in the explicit form. However, our reasoning shows that in order to find primitive solutions of the equation (\ref{quineq}) it is enough to find primitive solutions of the equation
\begin{equation}\label{specialequation}
T^2=H(U,V).
\end{equation}
A general expectation is that this type of equations should have infinitely many primitive solutions and in this case it should be possible to find parametric solution. However, the difficulties of finding demanded solutions highly depend on the coefficients $a, b, c, d$ and the chosen solution of $aX_{1}^5+bX_{2}^5+cX_{3}^5+dX_{4}^5=0$. We should note that this type of equations is discussed by Mordell \cite[Chap. 14, p. 112 and Chap. 25]{Mor} (however he mainly deals with the case when the leading coefficients of $H$ in $U$ or $V$ is equal to 1 - the condition which make things slightly simpler).

Let us also note that if we are lucky and the system of equations
$$
C_{1}=C_{2}=0
$$
has a solution for, say, $r, s$, then the polynomial $H$ constructed above will be reducible (and divisible by $U$). Then, it is quite likely that there are integers $p, q$ such that the equation (\ref{specialequation}) has solution in co-prime polynomials  which can be used in order to get co-prime polynomials satisfying (\ref{quineq}). However, in general the constructed polynomial solutions will be co-prime but over polynomial ring with {\it rational} coefficients.

As an application of our method we prove two results. Here is the first one:

\begin{thm}\label{fifth-general}
Let $m, n$ be non-zero integers. Then the Diophantine equation
\begin{equation}\label{5powerster}
T^2=m(X_{1}^5-X_{2}^5)+n^2(X_{3}^5-X_{4}^5)
\end{equation}
has solution in co-prime polynomials $X_{1}, X_{2}, X_{3}, X_{4}, T\in\Z[u]$. In particular, the Diophantine equation {\rm (\ref{5powerster})} has infinitely many primitive solutions in integers.
\end{thm}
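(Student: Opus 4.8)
The plan is to apply the method of this section to the quintic form $G(\overline X)=mX_1^5-mX_2^5+n^2X_3^5-n^2X_4^5$, starting from its obvious zero $(x_1,x_2,x_3,x_4)=(1,1,1,1)$. First, though, I would dispose of the two degenerate cases $m=\pm n^2$ directly. If $m=n^2$ then $G(\overline X)=n^2\bigl(X_1^5+X_3^5+(-X_2)^5+(-X_4)^5\bigr)$, while if $m=-n^2$ then $G(\overline X)=n^2\bigl(X_2^5+X_3^5+(-X_1)^5+(-X_4)^5\bigr)$; in either case, if $(Y_1,Y_2,Y_3,Y_4,T')\in\Z[u]^5$ are co-prime and satisfy $T'^2=Y_1^5+Y_2^5+Y_3^5+Y_4^5$ (Theorem~\ref{fifth} with rational parameter $1$), then a suitable sign-relabelling of the $Y_i$ together with $T=nT'$ gives the desired co-prime solution. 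Specialising $u$ then yields infinitely many primitive integer solutions: there are $A_i\in\Z[u]$ and a fixed non-zero $N\in\Z$ with $\sum A_iX_i=N$, so $\gcd\bigl(X_1(a),\dots,X_4(a)\bigr)\mid N$ for all $a$ and equals $1$ for infinitely many $a$. Hence from now on I assume $m\neq\pm n^2$.

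Next, following the recipe, substitute $X_i=p_iU+V$ with $(p_1,p_2,p_3,p_4)=(p,q,r,s)$, obtaining $G=\sum_{i=1}^{5}C_iU^iV^{5-i}$ with $C_i=\binom5i\bigl(m(p^i-q^i)+n^2(r^i-s^i)\bigr)$; the $i=0$ term vanishes since $(1,1,1,1)$ is a zero of $G$. Imposing $C_1=0$ forces $s=r+\tfrac{m}{n^2}(p-q)$, and then $G=U^2H(U,V)$ with $H$ a binary cubic whose coefficients $C_2,\dots,C_5$ are explicit rational functions of $p,q,r$; writing $p-q=\alpha$ one checks $C_i=m\alpha\,\widetilde C_i$ for explicit $\widetilde C_i$. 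Choosing $\alpha$ proportional to $mn^2$ (times a square) makes $m\alpha$ a perfect square and keeps $s$ integral, so $T^2=G$ will follow once the reduced cubic $\widetilde H:=H/(m\alpha)$ is made to have a \emph{repeated} linear factor; this is the single equation $\op{disc}(\widetilde H)=0$ in the two remaining projectivised parameters, and I would pick a convenient $\Q$-point on the resulting curve (arranging that $p,q,r,s$ stay distinct). One then has $\widetilde H=c\,\ell_1(U,V)^2\ell_2(U,V)$ with $c\in\Q$ and $\ell_1,\ell_2$ rational linear forms, so, since $m\alpha$ is a square, $G=(\text{square})\cdot\bigl(U\ell_1\bigr)^2\cdot c\,\ell_2$, and the whole problem collapses to making the \emph{linear} expression $c\,\ell_2(U,V)$ a perfect square.

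To finish, write $\ell_2(U,V)=aU+bV$ and set $c(aU+bV)=\sigma^2$, i.e.\ $V=(\sigma^2-acU)/(bc)$. Taking $U\in\Z[u]$ of degree $4$ and $\sigma\in\Z[u]$ of degree $2$ that are co-prime and have non-cancelling leading terms, and clearing denominators by one integer rescaling, produces $U,V,\sigma\in\Z[u]$ with $\deg U=\deg V=4$, $\gcd(U,V)=1$, and $T=\sqrt{m\alpha}\,U\ell_1(U,V)\sigma\in\Z[u]$. Then each $X_i=p_iU+V$ has degree $4$, and any non-constant common factor of all four would divide every difference $X_i-X_j=(p_i-p_j)U$, hence divide $U$, hence divide $V$, contradicting $\gcd(U,V)=1$; removing the integer content gives genuinely co-prime $X_1,X_2,X_3,X_4,T$, and specialising $u$ (as above) gives infinitely many primitive integer solutions.

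The hard part is the middle step: producing, \emph{uniformly} in all nonzero integers $m,n$ with $m\neq\pm n^2$, an admissible $\Q$-point on $\op{disc}(\widetilde H)=0$. The tempting shortcut of imposing only $C_1=C_2=0$ is not enough — one computes the discriminant of the residual quadratic factor of $H$ to be a nonzero multiple of $n^4+m^2$, so no repeated factor appears — and the naive attempt to force $U^3\cdot(\text{quadratic})$ to be a perfect square by taking $U$ a square reintroduces the common factor $U$ in all the $X_i$ and, moreover, runs into an auxiliary equation of the shape $Z^2=10W^2+(n^4+m^2)(\cdots)^2$ that is genuinely $\Q$-insoluble for some pairs (for instance $m=1$, $n=2$, where it amounts to $x^2-10y^2=17k^2$). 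So the construction must really proceed through a rational double root of the cubic $H$ itself, and the substance of the theorem is an explicit choice of the one-parameter family $(p,q,r,s)$ and of $(U,V)$ for which the claimed polynomial identity and the coprimality can be checked by a direct (if lengthy) computation.
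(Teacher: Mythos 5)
Your overall framework is right, but the proposal founders at the middle step, and the reason you give for abandoning the simpler route is a miscalculation rather than a genuine obstruction. The paper proceeds exactly along the path you dismiss: it imposes $C_1=C_2=0$, solves for $r,s$ in terms of $p,q$, and after a linear change of variables reduces to
\begin{equation*}
T^2=m(n^4-m^2)\,U_1\bigl(40V_1^2+(m^2+n^4)U_1^2\bigr).
\end{equation*}
You are correct that the residual quadratic does not split over $\Q$, but no splitting is needed: this is a surface, not a curve, and one still has the freedom to choose the square multiplier in $U_1$. Setting $U_1=10m(n^4-m^2)U_2^2$ --- the factor $10$ is the whole point, since $10\cdot 40=400$ is a perfect square --- and $T=10m(n^4-m^2)U_2T_1$ turns the equation into
\begin{equation*}
T_1^2-4V_1^2=10m^2(m^2-n^4)^2(m^2+n^4)U_2^4,
\end{equation*}
whose left-hand side factors as $(T_1-2V_1)(T_1+2V_1)$; assigning the two factors the values $10m^2(m+n^2)^2(m^2+n^4)$ and $(m-n^2)^2t^4$ with $U_2=t$ gives explicit degree-$4$ polynomials $X_i(t)$, and a short gcd argument (using $X_1+X_2-2X_3=-40m^2(m^2-n^4)t^4$, the non-vanishing of $X_i(0)$, and the rescaling $t=40m^2(m^2-n^4)u$) yields coprimality over $\Z[t]$. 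Your anisotropic conic $x^2-10y^2=17k^2$ arises only because you in effect took the multiplier $1$ instead of $10$; with the correct multiplier the conic is isotropic for every $m,n$. (Also, taking $U$ to be a square does not reintroduce $U$ as a common factor of the $X_i=p_iU+V$: any common factor of the $X_i$ must divide $V$ as well, so coprimality of $U$ and $V$ suffices.)

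Having rejected the workable route, you fall back on forcing the cubic $H$ (with only $C_1=0$ imposed) to acquire a rational double root, and you explicitly leave open the ``hard part'' of producing an admissible $\Q$-point on $\op{disc}(\widetilde{H})=0$ uniformly in $m,n$. That unproved step is precisely where the substance of the theorem would lie, so as written the proposal is not a proof. The pieces that do work --- the treatment of the degenerate cases $m=\pm n^2$ via Theorem \ref{fifth}, and the final coprimality and specialization arguments --- match the paper's.
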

\begin{proof}
We follow the presented method with $(x_{1},x_{2},x_{3},x_{4})=(1,1,1,1)$ and take $X_{i}$ as in (\ref{specialsub}). We then have $G(X_{1},X_{2},X_{3},X_{4})=\sum_{i=1}^{5}C_{i}U^{i}V^{5-i}$ and consider the system of equations $C_{1}=C_{2}=0$ which in explicit form is of the shape:
$$
5(mp - mq + n^2r - n^2s)=0,\quad 10(mp^2 - mq^2 + n^2r^2 - n^2s^2)=0.
$$
We solve this system in $r, s$ and get
$$
r=\frac{(n^2-m)p + (n^2+m)q}{2n^2},\quad s=\frac{(n^2+m)p + (n^2-m)q}{2n^2}.
$$
Substituting now the computed values of $r$ and $s$ we note that up to multiplication by square (which can be neglected) we get
\begin{align*}
H(U,V)=&m (n^4 - m^2)(p-q)U\times \\
       &(40n^4V(V+(p+q)U)+((m^2+11n^4)(p^2+q^2)+2(9 n^4-m^2)pq)U^2).
\end{align*}
In the light of our reasoning we need to find integer solutions of the equation $T^2=H(U,V)$. However, in order to do this we introduce new variables $U_{1}. V_{1}$ in the following way:
$$
U=\frac{1}{p-q}U_{1},\quad V=\frac{1}{n^2}V_{1}-\frac{p+q}{2(p-q)}U_{1}.
$$
With $U, V$ given above we get that $H$ takes the form
$$
H(U,V)=H_{1}(U_{1},V_{1})=m(n^4-m^2)U_{1}(40V_{1}^2+(m^2 + n^4)U_{1}^2).
$$
We thus consider the equation $T^2=H_{1}(U_{1},V_{1})$ and in order to find solutions its we are interested in, we put
$$
U_{1}=10m(n^4 - m^2)U_{2}^2, \quad T=10m(n^4 - m^2)U_{2}T_{1}
$$
and left with the equation
$$
T_{1}^2-4V_{1}^2=10m^2(m^2 - n^4)^2(m^2+n^4)U_{2}^4.
$$
This equation can be easily solved by taking  $T_{1}, V_{1}, U_{2}$ as the solutions of the system of equations
$$
T_{1}-2V_{1}=10 m^2(m + n^2)^2(m^2 + n^4), \quad T_{1} + 2V_{1}=(m - n^2)^2t^4,\quad U_{2}=t.
$$
We get $U_{2}=t$ and
\begin{align*}
V_{1}&=\frac{1}{4}(10 m^2(m^2 - n^4)^2(m^2 + n^4)t^4-1)),\\
T_{1}&=\frac{1}{2}(10 m^2(m^2 - n^4)^2(m^2 + n^4)t^4+1).
\end{align*}
Using now $V_{1}$ and $T_{1}$ and performing all necessary simplifications we get the solutions of the equation (\ref{5powerster}) in the following form
\begin{align*}
X_{1}&=10 m^2(m^2 - n^4)^2(m^2 + n^4)t^4-20 m n^2 (m^2 - n^4) t^2-1,\\
X_{2}&=10 m^2(m^2 - n^4)^2(m^2 + n^4)t^4+20 m n^2 (m^2 - n^4) t^2-1,\\
X_{3}&=10 m^2(m^2 - n^4)^2(m^2 + n^4)t^4+20m^2(m^2 - n^4)t^2-1,\\
X_{4}&=10 m^2(m^2 - n^4)^2(m^2 + n^4)t^4-20m^2(m^2 - n^4)t^2-1,
\end{align*}
with $T$ given by
$$
T=400m^2n(n^4 - m^2)^2t^3(10 m^2(m^2 - n^4)^2(m^2 + n^4)t^4+1).
$$
It is clear that in order to have non-trivial solutions we need to assume $m^2-n^4\neq 0$. Indeed, if $m^2=n^4$ then the right hand side of (\ref{5powerster}) with the computed $X_{i}, i=1,2,3,4$ vanish. However, in this case we can invoke the statement of Theorem \ref{fifth} and get non-trivial solutions.
Let us observe that under the assumption $m^2-n^4\neq 0$ we have
$$
\op{gcd} _{\Z[t]}(X_{1}(t), X_{2}(t),X_{3}(t),X_{4}(t))\Big|40m^2(m^2 - n^4)
$$
which is consequence of the identity $X_{1}+X_{2}-2X_{3}=-40m^2(m^2 - n^4)t^4$ and non-vanishing of the value of $X_{i}(0)$ for $i=1, 2, 3, 4$. Thus if, we substitute $t=40m^2(m^2 - n^4)u$ and consider the corresponding polynomials $Y_{i}(u), i=1,2,3,4$ then we will have $Y_{i}(u)\equiv -1\pmod{40m^2(m^2 - n^4)}$. This implies the identity
$$
\op{gcd}_{\Z[u]}(Y_{1}(u), Y_{2}(u),Y_{3}(u),Y_{4}(u))=1.
$$
As a consequence of our construction we see that for any $u\in\Z$ and $X_{i}=Y_{i}(u)$ we have $\op{gcd}(X_{1},X_{2},X_{3},X_{4})=1$ and our result follows.
\end{proof}

It is quite interesting that the method we just presented can be also useful in investigations concerning the existence of primitive integer solutions of the equations
\begin{equation}\label{quineq3}
t^2=ax^5+by^5+cz^5.
\end{equation}
Indeed, if we assume that $d=0$ and $(x_{0}, y_{0}, z_{0})$ is solution of the equation
\begin{equation}\label{Fal}
ax^5+by^5+cz^5=0
\end{equation}
then using our method with
$$
x=pU+x_{0}V,\quad y=qU+y_{0}V,\quad z=rU+z_{0}V
$$
and
$$
r=-\frac{apx_{0}^4+bqy_{0}^4}{cz_{0}^4}
$$
we note that in order to find primitive solutions of the equation (\ref{quineq3}) it is enough to find primitive solutions of the equation of the form
\begin{equation*}
T^2=H(U,V),
\end{equation*}
where $H$ is cubic form depending on the solution  $(x_{0}, y_{0}, z_{0})$ of (\ref{Fal}). We are aware that this condition is even more restrictive then the one related to $d\neq 0$. This is due to the fact that the genus of the associated curve is 6. Thus, according to Fatings theorem there are only finitely many co-prime integer solutions of this equation satisfying the condition $xyz\neq 0$. However, as the next theorem shows, sometimes we can apply this method in order to get solutions of the equation (\ref{quineq3}).

\begin{thm}
Let $n\in\Q\setminus\{0\}$ be given. Then the Diophantine equation
\begin{equation}\label{5powersbis}
T^2=n(X_{1}^5+X_{2}^5-2X_{3}^5)
\end{equation}
has solution in co-prime polynomials $X_{1}, X_{2}, X_{3}, T\in\Z[t,w]$.
\end{thm}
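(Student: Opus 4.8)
The plan is to follow the method outlined just before the statement, specialized to the equation $t^2 = n(x^5 + y^5 - 2z^5)$, i.e. to the case $(a,b,c,d) = (n,n,-2n,0)$ in the notation of \eqref{quineq}, but really to the ternary situation \eqref{quineq3} with leading coefficients $(n, n, -2n)$. First I would exhibit the obvious solution $(x_0, y_0, z_0) = (1,1,1)$ of $X_1^5 + X_2^5 - 2X_3^5 = 0$, which is what makes the whole approach go through here. Then I would set $X_1 = pU + V$, $X_2 = qU + V$, $X_3 = rU + V$ and impose $C_1 = 0$, which forces the coefficient $r$ to be a linear combination of $p$ and $q$ (explicitly $r = (p+q)/2$, since $p + q - 2r = 0$ is the $C_1 = 0$ condition up to the overall factor $n$); after this substitution $G = n(X_1^5 + X_2^5 - 2X_3^5)$ becomes $U^2 H(U,V)$ with $H$ a binary cubic form in $U, V$ whose coefficients are polynomials in $p, q$ over $\Z[n]$.

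Next I would try to arrange that $H$ itself be reducible by also killing $C_2$; but with three free slots and $x_0 = y_0 = z_0 = 1$ only one of $r$ is available, so I expect instead that after the $C_1 = 0$ reduction the cubic $H(U,V)$ already factors partially because of the symmetry $p \leftrightarrow q$, or that one completes a square in $V$ by a linear change of variables $V \mapsto V + (\text{linear in } U)$ to bring $H$ into the shape $H_1(U_1, V_1) = (\text{const})\,U_1(AV_1^2 + B U_1^2)$, exactly as in the proof of Theorem~\ref{fifth-general}. Once $H$ is in that normal form, the equation $T^2 = H_1$ is solved by the same device: set $U_1 = A'(\text{const}) W^2$ and $T = A'(\text{const}) W T_1$ for a suitable constant making the coefficients square, reduce to $T_1^2 - A V_1^2 = (\text{stuff}) \cdot W^4$, and then split this as a product of two linear factors $T_1 - \sqrt{A} V_1$ and $T_1 + \sqrt{A} V_1$ — note here $A$ should come out to be a perfect square (I expect $A = 4$ or similar, as in the earlier theorem where the analogous constant was $40 = 4 \cdot 10$ with the $4$ giving the $2V_1$), so that the factorization is over $\Z$ — equating the two factors to $c_1 W^4$ and $c_2$ with $c_1 c_2$ equal to the right constant. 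This produces explicit polynomials $X_1, X_2, X_3, T \in \Z[n][t]$, and since $n \in \Q\setminus\{0\}$ the passage from integer $n$ to rational $n$ is handled by the same rescaling trick used at the end of the proof of Theorem~\ref{fifth} (write $n = a/b$, replace the variable $t$ by a multiple of $b$, and divide the $X_i$ by an appropriate power of $b$); this also explains the second variable $w$ in $\Z[t,w]$ — presumably one of $p$ or $q$ (or their difference) survives as a genuine free parameter and is renamed $w$.

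The coprimality claim is the step I'd be most careful about, though I do not expect it to be the true obstacle. As in the previous proofs one writes down a $\Z$-linear identity among $X_1, X_2, X_3$ (here the natural candidate is $X_1 + X_2 - 2X_3 = (\text{monomial in } t)$, which is forced by the very construction since that combination is what the substitution was designed to make vanish to high order) together with the observation that $X_i(0) \ne 0$; this shows $\gcd_{\Z[t]}(X_1, X_2, X_3)$ divides a fixed integer $D$ depending on $n$. Then substituting $t \mapsto D t$ (or $Dw$) makes every $X_i$ congruent to a unit mod $D$, forcing the gcd down to $1$ over $\Z[t,w]$. The genuinely delicate point — and where I'd spend the most effort — is making sure that after all the square-cancellations the resulting $X_i$ really do lie in $\Z[t,w]$ and have no spurious non-constant common factor introduced by the changes of variables $U = \frac{1}{p-q}U_1$-type substitutions (which have denominators); in the proof of Theorem~\ref{fifth-general} such denominators were cleared at the cost of multiplying through by squares, and I would need to track that the analogous clearing here does not destroy primitivity. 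The fact that the base curve here has genus $6$ (as the authors note for \eqref{quineq3}) is irrelevant to producing one polynomial family; it only says one cannot hope for all integer solutions this way.
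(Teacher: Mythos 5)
Your overall skeleton is the paper's: base solution $(1,1,1)$, the substitution $X_{i}=p_{i}U+V$, the condition $C_{1}=0$ forcing $r=(p+q)/2$, and then the reduction to $T^2=H(U,V)$ for the resulting binary cubic. Where you diverge is in how $T^2=H$ is solved. The paper fixes $(p,q)=(1,3)$, obtains $H=10n(2U+V)(9U^2+8UV+2V^2)$, and makes the two factors squares \emph{separately}: it first parametrizes the conic $9U^2+8UV+2V^2=\square$ from the rational point $(U,V)=(1,0)$, getting $U=-u^2+2v^2$, $V=2(3u-4v)v$, and then forces $10n(2U+V)=-20n(u-2v)(u-v)$ to be a square by the further substitution $u=5nt^2+2w^2$, $v=5nt^2+w^2$; this two-stage process is what produces the two variables $t,w$ (they do not come from a surviving $p$ or $q$, which are fixed numbers). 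Your route instead completes the square: indeed $9U^2+8UV+2V^2=2(2U+V)^2+U^2$, so $H=10n\,U_{1}(V_{1}^2+2U_{1}^2)$ with $U_{1}=2U+V$, $V_{1}=U$, and your constant $A$ comes out to be $1$, a perfect square, so the difference-of-squares device of Theorem~\ref{fifth-general} applies verbatim and yields a one-parameter family (e.g. $U_{1}=10nW^2$, $T_{1}^2-V_{1}^2=200n^2W^4$, $V_{1}=W^4-50n^2$). Both routes work; the paper's buys a two-parameter family, while yours is shorter and in fact produces polynomials of content $1$ for every integer $n$, avoiding the parity adjustment the paper needs when $n$ is even.

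The one concrete flaw is the coprimality identity you propose to rely on. By construction $X_{1}+X_{2}-2X_{3}=(p+q-2r)U+(1+1-2)V=0$ \emph{identically} -- the linear combination dual to the base solution $(1,1,1)$ vanishes, it is not a monomial -- so it gives no bound whatsoever on the gcd. (This is visible in the paper's explicit answer, where $X_{1}+X_{2}=20nt^2w^2=2X_{3}$.) The analogous identity in Theorem~\ref{fifth-general} is nonzero only because there the base solution and the coefficient vector interact differently. The repair is direct inspection of the explicit family: $X_{3}$ is a constant times a monomial, neither variable divides $X_{1}$ (its restrictions to $t=0$ and to $w=0$ are $2w^4$ and $-25n^2t^4$, both nonzero), and the content of $X_{1}$ is $\gcd(2,10n,25n^2)=1$ for odd $n$; for $n=2^{\alpha}n_{1}$ even one rescales $w\mapsto 2^{\lfloor\alpha/2\rfloor}w$ and divides out $2^{2\alpha}$, exactly as at the end of the proof of Theorem~\ref{fifth}. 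Your treatment of rational $n$ matches the paper.
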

\begin{proof} First we consider the case $n\in\Z\setminus\{0\}$. The equation
$X_{1}^5+X_{2}^5-2X_{3}^5=0$ has solution $(1,1,1)$ and following the method we described above with $p=1, q=3$ we put
\begin{equation*}
X_{1}=U+V,\quad X_{2}=3U+V,\quad X_{3}=2U+V.
\end{equation*}
For $X_{1}, X_{2}, X_{3}$ defined in this way we have
\begin{equation*}
n(X_{1}^5+X_{2}^5-2X_{3}^5)=10nU^2(2U+V)(9U^2+8UV+2V^2).
\end{equation*}
In order to find
solutions we consider the system
\begin{equation}\label{sys}
10n(2U+V)=\square,\quad 9U^2+8UV+2V^2=\square.
\end{equation}
The solution of the second equation
can be easily found with the standard method of projection form rational point with $U=1, V=0$, and we get parametrization
\begin{equation*}
 U=-u^2 + 2 v^2, \quad V=2(3u-4v)v.
 \end{equation*} For $U, V$ given above we have $10n(2U+V)=-20n(u-2v)(u-v)$ and in order to make this expression a square we solve the system
$-5n(u-2v)=(5nt)^2, 4(u - v)=(2w)^2$ with respect to $u, v$ and get
\begin{equation*}
u=5nt^2+2w^2,\quad v=5nt^2+w^2.
\end{equation*}
Finally, tracing back your
reasoning, we get the solution of the equation (\ref{5powersbis}) in the following form:
\begin{equation*}
X_{1}=2w^4+10nt^2w^2-25n^2t^4,\quad X_{2}=25n^2t^4+10nt^2w^2-2w^4, \quad X_{3}=10nt^2w^2.
\end{equation*} The corresponding value of $T$ takes the form $T=5ntw(25n^2t^4+2w^4)(X_2-X_1)$.  If
$n\equiv 0\pmod{2}$ then our solution has constant common factor equal to 2 and need to be slightly different. In order to eliminate it we put
$n=2^{\alpha}n_{1}$ with $n_{1}\equiv 1\pmod{2}$. Then we replace $w$ by $2^{\beta}w$, where $\beta=\lfloor\frac{\alpha}{2}\rfloor$. After this substitution the
polynomial $X_{i}$ for $i = 1, 2, 3$, is divisible by $2^{2\alpha}$, and thus we can eliminate this common factor. Then we get that the coefficient of the
polynomial $X_{i}$ free of $t$ is odd for all $w\in\Z$ and $i=1, 2, 3$. We thus get the statement in case of $n\in \Z \setminus \{0\}$.

In order to get solutions in case of $n=a/b\in\Q\setminus\Z$, with $\gcd(a,b)=1$, we perform exactly the same reasoning as at the end of the proof of Theorem
\ref{fifth}, i.e., we replace $n$ by $ab$ and $w$ by $bw$. Our result follows.
\end{proof}

\begin{rem}\label{Bremner0} {\rm Of course our method has limitations and can not be used in all situations. For example, it can not be used in order to tackle the Diophantine equation
\begin{equation}\label{Bremner1}
T^2=X_{1}^5+X_{2}^5+X_{3}^5
\end{equation}
which has strong classical flavour. Indeed, one can say: {\it Find three fifth powers of non-zero integers without sub-sum equal to zero, which sum to square}. Remarkably, it is possible to find infinitely many solutions of the above equation using "experimental" approach. We owe the following reasoning for Andrew Bremner which showed how one can deal with equation (\ref{Bremner1}).

First of all the list of the primitive solutions of (\ref{Bremner1}) was prepared with $X_{1}, X_{2}, X_{3}$ satisfying the conditions
$$
|X_{1}|+|X_{2}|+|X_{3}|\leq 7500 \quad \mbox{with}\quad X_{1}\leq X_{2}\leq X_{3}.
$$
A clever investigations of the list reveals that there are many solutions for which $X_{1}=a^2$ and $X_{1}+X_{2}+X_{3}=b^2$ for some $a, b\in\Z$. Looking at these solutions, it was noticed that $X_{1}^5+X_{2}^5+X_{3}^5$ was the square of $b^5-m(x+z)(y+z)$ for small integer $m$. Further investigation reveals that $m$ should be taken as $m=5(b-a)$. Thus:
$$
X_{2}=-a^2 + b^2 - X_{1},\quad X_{3}=a^2,\quad T=b^5-5(b-a)(b^2 - x) (a^2 + x).
$$
With $X_{2}, X_{3}$ and $T$ chosen in this way the following identity comes to light
\begin{equation}\label{Bremner2}
T^2-(X_{1}^5+X_{2}^5+X_{3}^5)=5(b-a)(b^2-X_{1})(a^2+X_{1})G(X_{1}),
\end{equation}
where
$$
G(X_{1})=2(3 a - 2 b)X_{1}^2+2(3 a - 2 b)(a^2 - b^2)X_{1}+(a - b)^2 (a^3 + 3 a^2 b - a b^2 - b^3).
$$
Thus, the right hand side of (\ref{Bremner2}) vanish if and only if $(b-a)(b^2-X_{1})(a^2+X_{1})G(X_{1})=0$. The first three factors are not of interest. If $a, b\in\Z$ then last factor vanish for certain $X_{1}\in\Z$ if and only if the discriminant of $G$ is a square in $\Z$, i.e.
$$
\Delta(G)=a(3a-2b)(a-b)^4=\square.
$$
One can take $a=-2t^2, b=1-3t^2$ and this choice give the solution of (\ref{Bremner1}) in the following form
\begin{equation*}
X_{1}=\frac{1}{2}(t^2-1)(t^3+5t^2-t-1),\quad X_{2}=\frac{1}{2}(1-t^2)(t^3-5t^2-t+1),\quad X_{3}=4t^4.
\end{equation*}
Taking $Y_{i}=\frac{1}{4}X_{i}(2u+1)\in\Z[u]$ and noting the identity
$$
8Y_{2}(u)+(1 - 18 u - 44 u^2 - 8 u^3 + 16 u^4)(2u+1)=1
$$
we get $\op{gcd}_{\Z[u]}(Y_{2}(u),Y_{3}(u))=1$ and thus $\op{gcd}_{\Z[u]}(Y_{1}(u),Y_{2}(u),Y_{3}(u))=1$. Summing up, the following is true:

\begin{thm}\label{Bremner}
The Diophantine equation $T^2=X_{1}^5+X_{2}^5+X_{3}^5$ has infinitely many solutions in integers satisfying the condition $X_{i}^2\neq X_{j}^2$ for $i\neq j$ with $i, j\in\{1,2,3\}$.
\end{thm}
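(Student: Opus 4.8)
The plan is to produce an explicit one-parameter family of polynomial solutions, following the ``experimental'' recipe set out above in this remark. I would first restrict to solutions in which the last summand is a square and the total is a square: set $X_{3}=a^2$ and impose $X_{1}+X_{2}+X_{3}=b^2$, so that $X_{2}=b^2-a^2-X_{1}$ is forced. Guided by the numerical evidence, I would then guess that $X_{1}^5+X_{2}^5+X_{3}^5$ is the square of
\begin{equation*}
T=b^5-5(b-a)(a^2+X_{1})(b^2-X_{1}),
\end{equation*}
and verify by a direct expansion the polynomial identity
\begin{equation*}
T^2-(X_{1}^5+X_{2}^5+X_{3}^5)=5(b-a)(b^2-X_{1})(a^2+X_{1})\,G(X_{1}),
\end{equation*}
where $G$ is the explicit quadratic in $X_{1}$ with coefficients in $\Z[a,b]$ recorded above. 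Since the factors $b-a$, $b^2-X_{1}$, $a^2+X_{1}$ only produce degenerate solutions, the whole problem collapses to choosing $a,b$ so that the quadratic $G$ acquires a rational root $X_{1}$.

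Being quadratic in $X_{1}$, the form $G$ has a rational root exactly when its discriminant is a square, and a short computation gives $\Delta(G)=a(3a-2b)(a-b)^4$; thus it suffices to make $a(3a-2b)$ a perfect square. I would parametrize this conic by $a=-2t^2$, $b=1-3t^2$, for which $3a-2b=-2$ and hence $a(3a-2b)=(2t)^2$. With these $a,b$, solving $G(X_{1})=0$ for $X_{1}$ via the quadratic formula and substituting back into $X_{2}=b^2-a^2-X_{1}$, $X_{3}=a^2$ and into the formula for $T$, a routine simplification yields the polynomial solution
\begin{equation*}
X_{1}=\frac{1}{2}(t^2-1)(t^3+5t^2-t-1),\quad X_{2}=\frac{1}{2}(1-t^2)(t^3-5t^2-t+1),\quad X_{3}=4t^4,
\end{equation*}
together with a companion $T\in\Q[t]$, satisfying $T^2=X_{1}^5+X_{2}^5+X_{3}^5$ identically in $t$.

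It remains to extract infinitely many genuine primitive integer solutions with $X_{i}^2\neq X_{j}^2$. To absorb the factor $\frac{1}{2}$, observe that $t^2-1$ is even whenever $t$ is odd, so I would substitute $t=2u+1$ and put $Y_{i}(u):=\frac{1}{4}X_{i}(2u+1)$, which all lie in $\Z[u]$, with $Y_{3}(u)=(2u+1)^4$. Exhibiting the B\'ezout-type identity
\begin{equation*}
8\,Y_{2}(u)+(1-18u-44u^2-8u^3+16u^4)(2u+1)=1
\end{equation*}
shows $\op{gcd}_{\Z[u]}(Y_{2}(u),2u+1)=1$, hence $\op{gcd}(Y_{2},Y_{3})=1$ and a fortiori $\op{gcd}(Y_{1},Y_{2},Y_{3})=1$; since this identity holds at every integer $u$, one gets $\op{gcd}(Y_{1}(u),Y_{2}(u),Y_{3}(u))=1$ for all $u\in\Z$. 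Moreover, for each integer $u$ the number $Y_{1}(u)^5+Y_{2}(u)^5+Y_{3}(u)^5$ is a rational square (the identity $T^2=\sum X_{i}^5$, rescaled), hence an integer square, so it genuinely solves the equation. Finally, since $X_{i}^2-X_{j}^2$ is a nonzero polynomial for $i\neq j$ (immediate from the degrees for the pairs involving $X_{3}$, and a one-line check for $\{X_{1},X_{2}\}$), it vanishes for only finitely many $u$, and discarding those values leaves infinitely many triples with the asserted inequalities. The real obstacle is the discovery step — guessing the correct form of $T$ and, above all, the multiplier $5(b-a)$; once that ansatz is in hand the argument is just bookkeeping of polynomial identities together with one discriminant computation.
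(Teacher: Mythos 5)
Your proposal is correct and follows essentially the same route as the paper's own argument: the ansatz $X_3=a^2$, $X_1+X_2+X_3=b^2$ with $T=b^5-5(b-a)(a^2+X_1)(b^2-X_1)$, the discriminant condition $a(3a-2b)=\square$ parametrized by $a=-2t^2$, $b=1-3t^2$, and the same B\'ezout identity for coprimality after the substitution $t=2u+1$. The only additions are routine verifications (integrality of the square, finiteness of the degenerate $u$) that the paper leaves implicit.
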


}
\end{rem}

\begin{rem}
{\rm
Motivated by the results of this and previous sections, we performed small numerical search for primitive integer solutions of the Diophantine equation (\ref{quineq}) with quadruplets of positive integers $a, b, c$ and $d\in\N$
satisfying the condition $0<a\leq b\leq c\leq 10$ and $0\leq d\leq 10$. Remarkably, in each case we found at least one primitive solution of (\ref{quineq}) among the integers satisfying the condition $\op{max}\{X_{1},X_{2},X_{3},X_{4}\}\leq 100$ in case $d\neq 0$. The same is true in case $d=0$. We thus dare to state the following:

\begin{conj}
\begin{enumerate} \item For each $a, b, c\in\Z\setminus\{0\}$ and $d\in\Z$ the Diophantine equation $T^2=aX_{1}^5+bX_{2}^5+cX_{3}^5+dX_{4}^5$ has at least one solution in co-prime integers $X_{1}, X_{2}, X_{3}, X_{4}$.

\item For each $a, b, c\in\Z\setminus\{0\}$ and $d\in\Z$ the Diophantine equation $T^2=aX_{1}^5+bX_{2}^5+cX_{3}^5+dX_{4}^5$ has infinitely many solutions in co-prime integers $X_{1}, X_{2}, X_{3}, X_{4}$.
\end{enumerate}
\end{conj}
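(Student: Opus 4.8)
The plan is to push the reduction method of Section \ref{sec4} as far as it will go and to pin down exactly the two places where it can break. The guiding observation is that it suffices to produce, for each admissible tuple $(a,b,c,d)$, a single one-parameter family of integer solutions: such a family settles assertions (1) and (2) simultaneously, since the infinitude in (2) is then free and (1) is a special value, with coprimality forced at the end by the rescaling device used in Theorems \ref{fifth} and \ref{fifth-general}. The prerequisite for launching the method is a nonzero integer base point $(x_1,x_2,x_3,x_4)$ of the quintic form, i.e. a solution of
\begin{equation*}
aX_1^5+bX_2^5+cX_3^5+dX_4^5=0 ,
\end{equation*}
and I record that the subcase $d=0$ is covered by the same scheme through the three-variable reduction described around (\ref{quineq3})--(\ref{Fal}).

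Granting a base point, I would set $X_i=p_iU+x_iV$ as in (\ref{specialsub}) and impose $C_1=0$ to extract the factor $U^2$, reducing the problem to $T^2=H(U,V)$ for a binary cubic form $H$ whose coefficients depend on the free parameters $p_i$ and on the base point. Whenever the extra linear condition $C_2=0$ can be solved for some of the $p_i$, the cubic $H$ becomes divisible by $U$, say $H(U,V)=U\,Q(U,V)$ with $Q$ quadratic; then, exactly as in the proof of Theorem \ref{fifth-general} and in the treatment of (\ref{5powersbis}), I would make $U$ a perfect square, solve the residual conic by projection from an evident rational point, and back-substitute to obtain polynomial $X_i$.

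The coprimality bookkeeping would then follow Sections \ref{sec2} and \ref{sec4} verbatim: compute the relevant resultants to locate the finitely many primes that can divide $\gcd(X_1,X_2,X_3,X_4)$, replace the surviving parameter $t$ by $Mt$ for a suitable integer $M$ built from that content so that each $X_i\equiv X_i(0)\pmod M$, and exhibit a $\Z$-linear identity $\sum_i\lambda_iX_i=\text{const}$ that pins the gcd to $1$. This step is routine once a clean parametric family is in hand, and, being valid for every specialization of $t$, it delivers the infinitude demanded in (2).

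The main obstacle is the very first step. For a generic tuple $(a,b,c,d)$ the quintic form need not admit any nonzero rational zero at all: there may be archimedean sign constraints or congruence obstructions modulo small primes, and the paper itself emphasizes that even the existence of such zeros is unresolved. When no base point exists the reduction collapses entirely, and a genuinely different idea is required, namely a direct search for a rational curve on the surface $T^2=G(\overline{X})$ that does not pass through a point of the zero locus. A secondary but real difficulty arises even when a base point is available: if only $C_1=0$ can be arranged, then $H$ stays irreducible and $T^2=H(U,V)$ defines an elliptic surface, so producing a section---let alone one of infinite order, as needed for (2)---is not automatic and would have to be controlled uniformly in $(a,b,c,d)$. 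I therefore expect the conjecture to demand, beyond the mechanism of Section \ref{sec4}, a uniform construction of rational curves on $T^2=G$ for precisely those coefficient tuples whose associated quintic form is everywhere locally, but not globally, trivial.
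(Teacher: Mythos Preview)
The statement you are addressing is a \emph{conjecture}, not a theorem: the paper offers no proof, only numerical evidence from a search over $0<a\leq b\leq c\leq 10$, $0\leq d\leq 10$, $\max|X_i|\leq 100$. So there is no ``paper's own proof'' to compare against, and your proposal is an attempt to settle an open problem.

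As such, your write-up is not a proof but an honest outline of why the method of Section~\ref{sec4} does \emph{not} suffice, and you have located the genuine gaps correctly. The first obstruction---existence of a nonzero integer zero of $aX_1^5+bX_2^5+cX_3^5+dX_4^5$---is already fatal for general $(a,b,c,d)$: take for instance all of $a,b,c,d$ positive, where the only real zero is the origin, so the entire reduction of Section~\ref{sec4} cannot even start. Your proposed remedy (``a direct search for a rational curve on the surface $T^2=G(\overline X)$ that does not pass through a point of the zero locus'') is a restatement of the problem, not a method. The second obstruction you name---that with only $C_1=0$ one is left with $T^2=H(U,V)$ for an irreducible binary cubic $H$, hence a genus-one curve over the parameter base---is also real, and nothing in the paper gives a uniform way to produce sections. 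In short: your analysis of where the argument breaks is accurate, but the proposal does not supply the missing ideas, and the conjecture remains open exactly as the authors leave it.
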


} \end{rem}

\bigskip

\noindent {\bf Acknowledgments}
{\rm We thank Professor Andrew Bremner for kind permission to include his observation concerning the Diophantine equation $T^2=X_{1}^5+X_{2}^5+X_{3}^5$ as a Remark \ref{Bremner0} (and thus Theorem \ref{Bremner}).}

\bigskip
\noindent
Maciej Gawron, Maciej Ulas, Jagiellonian University, Faculty of Mathematics and Computer Science, Institute of Mathematics, {\L}ojasiewicza 6, 30 - 348 Krak\'{o}w, Poland
\\ e-mail:\;{\tt maciekggawron@gmail.com}
\\ e-mail:\;{\tt maciej.ulas@uj.edu.pl}


\begin{thebibliography}{100}







\bibitem{Lan} L. J. Lander, {\it Equal sums of unlike powers}, Fibonacci Quart. 28 (1990), 141–-150.


\bibitem{Mor} L. J. Mordell, {\it Diophantine equations}, Academic Press, London, 1969.

\bibitem{Mo} I. Mostafa, {\it A new approach to polynomial identities}, Ramanujan J. 8 (2004), 423–-457.

\bibitem{Sch1} A. Schinzel, {\it On the diophantine equation $\sum_{k=1}^{n}A_{k}x_{k}^{\partial_{k}}=0$}. (in Polish), Prace Mat. 4 (1960), 45-–51.

\bibitem{Sch2} A. Schinzel, Corrigendum to the paper: {\it On the diophantine equation $\sum_{k=1}^{n}A_{k}x_{k}^{\partial_{k}}=0$},  Comment. Math. Prace  Mat. 44 (2) (2004), 283-–284.


\end{thebibliography}
 \end{document}